\documentclass[11pt]{article}


\usepackage{amsmath} 
\usepackage{amssymb}
\usepackage{amsfonts}
\usepackage{latexsym}
\usepackage{amsthm}
\usepackage{amsxtra}
\usepackage{amscd}
\usepackage{bbm}
\usepackage{mathrsfs}
\usepackage{bm}
\usepackage{xcolor}
\usepackage{subcaption}
\usepackage[utf8]{inputenc}


\usepackage{graphicx, color}

\definecolor{darkred}{rgb}{0.9,0,0.3}
\definecolor{darkblue}{rgb}{0,0.3,0.9}

\usepackage{hyperref}




\theoremstyle{plain} 
\newtheorem{theorem}{Theorem}[section]
\newtheorem*{theorem*}{Theorem}
\newtheorem{lemma}[theorem]{Lemma}
\newtheorem{assumption}[theorem]{Assumption}
\newtheorem*{lemma*}{Lemma}

\newtheorem*{corollary*}{Corollary}
\newtheorem{proposition}[theorem]{Proposition}
\newtheorem*{proposition*}{Proposition}
\newtheorem{definition}[theorem]{Definition}
\newtheorem*{definition*}{Definition}

\newtheorem*{conjecture*}{Conjecture}

\theoremstyle{definition} 

\newtheorem*{example*}{Example}
\newtheorem{remark}[theorem]{Remark}
\newtheorem*{remark*}{Remark}


\renewcommand{\P}{\mathbb{P}}

\renewcommand{\H}{\mathbb{H}}


\renewcommand{\leq}{\leqslant}
\renewcommand{\geq}{\geqslant}
\renewcommand{\epsilon}{\varepsilon}


\def\brb#1{\left[#1\right]}

%

\title{Continuity in $\kappa$ in $SLE_\kappa$ theory using a constructive method and Rough Path Theory}
\author{Dmitry Beliaev, Terry J. Lyons,  Vlad Margarint}

\begin{document}

\maketitle

\begin{abstract}
Questions regarding the continuity in  $\kappa$ of the $SLE_{\kappa}$ traces and maps appear very naturally in the study of SLE.
In order to study the first question, we consider a natural coupling of SLE traces: for different values of $\kappa$ we use the same Brownian motion. It is very natural to assume that with probability one, $SLE_\kappa$ depends continuously on $\kappa$. It is rather easy to show that $SLE$ is continuous in the Carath\'eodory sense, but showing that $SLE$ traces are continuous in the uniform sense is much harder. In this note we show that for a given sequence $\kappa_j\to\kappa \in (0, 8/3)$, for almost every Brownian motion $SLE_\kappa$ traces converge locally uniformly. This result was also recently obtained by Friz, Tran and Yuan using different methods. In our analysis, we provide a constructive way to study the $SLE_{\kappa}$ traces for varying parameter $\kappa \in (0, 8/3)$. 
The argument is based on a new dynamical view on the approximation of SLE curves by curves driven by a piecewise square root approximation of the Brownian motion.
 
The second question can be answered naturally in the framework of Rough Path Theory. Using this theory, we prove that the solutions of the backward Loewner Differential Equation driven by $\sqrt{\kappa}B_t$ when started away from the origin are continuous in the $p$-variation topology in the parameter $\kappa$, for all $\kappa \in \mathbb{R}_+$.
\end{abstract}

\section{Introduction}
The Schramm-Loewner evolution $SLE_{\kappa}$ is a one-parameter family of random planar growth processes constructed as a solution to Loewner equation when the driving term is a Brownian motion with diffusivity $\kappa>0$. It was introduced in  \cite{schramm2000scaling} by Oded Schramm, in order to give meaning to the scaling limits of Loop-Erased Random Walk and Uniform Spanning Trees.

The problem of continuity of the traces generated by Lowener chains was studied in the context of chains driven by bounded variation drivers in \cite{shekhar2019remarks}, where the continuity of the traces generated by the Loewner chains was established. Also, the question appeared in \cite{lind2010collisions}, where the Loewner chains were driven by H\" older-$1/2$ functions with norm bounded by $\sigma$ with $\sigma < 4\,.$ In this context, the continuity of the corresponding traces was established with respect to the uniform topologies on the space of drivers and with respect to the same topology on the space of simple curves in $\H$. Another paper that addressed a similar problem is \cite{sheffield2012strong}, in which the condition $\|U\|_{1/2}<4$ is avoided at the cost of assuming some conditions on the limiting trace. Some stronger continuity results are obtained in  \cite{friz2017existence} under the assumption that the driver has finite energy, in the sense that $\dot{U}$ is square integrable. 

The question appeares naturally when considering the solution of the corresponding welding problem in \cite{astala2011random}. In this paper it is proved that the trace obtained when solving the corresponding welding problem is continuous in a parameter that appears naturally in the setting. In the context of $SLE_{\kappa}$ traces the problem was studied in \cite{viklund2014continuity}, where the continuity in $\kappa$ of the $SLE_{\kappa}$ traces was proved for any $\kappa <2.1$. In \cite{friz2019regularity} the authors proved the continuity of the traces for $\kappa <8/3.$ 

 We emphasize that our proof uses a result from \cite{friz2019regularity} but the mehod of showing the continuity is different  from the one presented in \cite{friz2019regularity}. In particular, our method gives a constructive way to prove the continuity in $\kappa$ for $\kappa \in (0, 8/3)$ by square root interpolating the Brownian motion driver. 

Another element of the analysis is Rough Path Theory \cite{lyons1998differential} introduced by Terry Lyons in 1998. The theory provides a deterministic platform to study stochastic differential equations which extends both Young's integration and stochastic integration theory beyond regular functions and semi-martingales. Also, Rough Path Theory provides a method of constructing solutions to differential equations driven by paths that are not of bounded variation but have controlled roughness. 
In this note, we use Rough Path Theory in order to study the backward Loewner differential equation started away from the origin. More precisely, we first show that the backward Loewner differential equation driven by $\sqrt{\kappa}B_t$ started away from singularity is a Rough Differential Equation as in Rough Path Theory and then we prove the continuity of the solutions of this equation in the parameter $\kappa$ in the Rough Path $p$-variation topology.

\textbf{Acknowledgement:} T.L and V.M were  supported by ERC Advanced Grant (Grant Agreement No.291244 Esig), V.M. was funded by EPSRC grant 1657722, D.B. and V.M. were partially funded by EPSRC Fellowship  EP/M002896/1. V.M. acknowledges also the support of NYU-ECNU Institute of Mathematical Sciences at NYU Shanghai. We thank Huy Tran and Yizheng Yuan for reading the draft and offering useful suggestions.

\section{Preliminaries and the main results}
The \emph{forward (chordal) Loewner evolution} driven by function $\lambda(t)$, $t\in[0,T]$ is defined as the solution of the following ODE
\begin{equation}
\label{eq:forward LE}
\partial_t g_t(z)=\frac{2}{g_t(z)-\lambda(t)}
,\quad g_0(z)=z, \ z\in\H.
\end{equation}
The corresponding \emph{backward Loewner evolution} is the solution of
\begin{equation}
\label{eq: backward LE}
\partial_t h_t(z)=-\frac{2}{h_t(z)-\lambda(t)}
,\quad h_0(z)=z, \ z \in\H.
\end{equation}
The connection between them is that if we take the driving function $\lambda(T-t)$ in the backward evolution, then $h_T=g_T^{-1}$.

It is a standard fact that $g_t : \H_t=\H\setminus K_t \to \H$  where $\H_t$ is the set of points where the solution exists up to time $t$. Under certain assumptions there is a continuous curve $\gamma(t)=\lim_{y\to 0}g_t^{-1}(\lambda(t)+iy)$. This curve is called the \emph{trace} of the Loewner evolution and $\H_t$ is the unbounded component of $\H\setminus \gamma([0,t])$.

Stochastic Loewner Evolution $SLE_\kappa$ with $\kappa\ge 0$ is the Loewner Evolution driven by $\sqrt{\kappa}B_t$, where $B_t$ is the standard Brownian motion. It is known that the trace of $SLE$ exist almost surely. 

It is natural to ask how the trace of SLE depends on $\kappa$. This question has many different interpretations. In this note we are interested in the following form. Given a sample of the Brownian motion, we would like to show that with probability one, $SLE_\kappa$ trace $\gamma^\kappa(t)$ is continuous in $\kappa$ in the metric of the sup-norm on $[0,T]$ for any $T$.

Johansson Viklund, Rohde and Wong proved \cite{viklund2014continuity} this type of continuity for $\kappa \in [0, 8(2-\sqrt{3}))$. Friz, Tran and Yuan \cite{friz2019regularity} obtained a similar result for $\kappa<8/3$. In this note we obtain a constructive method to compare $SLE_{\kappa}$ traces with varying parameter $\kappa$ and prove their continuity in $\kappa$ for $\kappa \in (0, 8/3)$. Compared with the method in \cite{friz2019regularity}, we show how one can compare the $SLE_{\kappa}$ traces using an approximation algorithm developed in \cite{tran2015convergence}. We emphasize that the algorithm in \cite{tran2015convergence} is defined for fixed $\kappa$,  while in this note, we construct a dynamical version of this algorithm to compare $SLE_{\kappa}$ traces for  $\kappa \in (0, 8/3)$. 

\begin{theorem}\label{first main result}
Let us fix $\kappa \in (0, 8/3)$ and a sequence $\kappa_j\to \kappa$, then for every $T$ and almost every $\omega$
\[
\sup_{t\in[0,T]} |\gamma^{\kappa_j}(t)-\gamma^{\kappa}(t)|\to 0,
\]
where $\omega$ is an element of the probability space on which we define the Brownian motion. Moreover, for the curves  $\gamma^{n, \kappa_j}(t)$ generated by the square-root interpolation of the drivers $\sqrt{\kappa_j}B_t$, we have that for almost every Brownian motion 
\[
\sup_{t \in [0, T]}|\gamma^{n, \kappa_j}(t)-\gamma^{n, \kappa}(t)| \to 0.
\]
\end{theorem}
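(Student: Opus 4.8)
The plan is to prove the result in two halves, mirroring the two displays in the statement, and to leverage the approximation scheme of \cite{tran2015convergence} as the bridge between them. First I would set up the dynamical (simultaneous in $\kappa$) version of the square-root interpolation: given one Brownian sample $B$ on $[0,T]$ and a mesh of size $2^{-n}$, on each dyadic interval $[t_k, t_{k+1}]$ replace the driver $\sqrt{\kappa}B$ by the unique (forward or backward, appropriately signed) square-root-type function matching the endpoint increments, so that the interpolated driver $\lambda^{n,\kappa}$ is a piecewise $\mathrm{C}^{1/2}$ function depending on $\kappa$ only through the increments $\sqrt{\kappa}(B_{t_{k+1}}-B_{t_k})$. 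The curves $\gamma^{n,\kappa}$ are then generated by solving the Loewner equation with these explicit drivers, where each piece admits a closed-form conformal map. The second display is the statement that, at the level of these \emph{regularized} drivers, the map $\kappa \mapsto \gamma^{n,\kappa}(\cdot)$ is uniformly continuous on $[0,T]$.

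For that second (interpolated) statement I would argue piece by piece. On a single dyadic block the tip of the generated curve is an explicit algebraic function of the increment amplitude $\sqrt{\kappa}\,\Delta_k B$ and of the block length, so continuity in $\kappa$ there is elementary; the point is to propagate this through the composition of the $2^n$ conformal maps. The standard tool is a Loewner-type stability estimate: if two drivers $\lambda, \tilde\lambda$ are close in sup norm on $[0,T]$ and both have Loewner hulls staying in a good regime (bounded, away from degeneracy, with derivatives of $g_t$ controlled from below), then the generated traces are uniformly close, with a modulus depending only on $\|\lambda-\tilde\lambda\|_\infty$ and on the control constants. Since $\|\lambda^{n,\kappa_j}-\lambda^{n,\kappa}\|_\infty \to 0$ as $\kappa_j\to\kappa$ (the increments converge and there are finitely many pieces, $n$ fixed), this gives the second display, provided one has the a priori geometric control. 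Because $\kappa<8/3$, the SLE trace is simple and the interpolated approximations inherit uniform regularity bounds from \cite{tran2015convergence} (this is where that paper's estimates enter), so the good-regime hypotheses hold on an event of full measure.

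For the first display I would interpolate through the regularized curves: write
\[
\gamma^{\kappa_j} - \gamma^{\kappa} = \br{\gamma^{\kappa_j} - \gamma^{n,\kappa_j}} + \br{\gamma^{n,\kappa_j} - \gamma^{n,\kappa}} + \br{\gamma^{n,\kappa} - \gamma^{\kappa}},
\]
and bound the sup norm of each term. The two outer terms are controlled by the convergence theorem of \cite{tran2015convergence}, which says $\gamma^{n,\kappa}\to\gamma^{\kappa}$ uniformly on $[0,T]$ almost surely for each fixed $\kappa<8/3$; the subtlety is that I need this convergence to be \emph{uniform in $\kappa$ along the sequence $\kappa_j$}, i.e. that $\sup_j \|\gamma^{\kappa_j}-\gamma^{n,\kappa_j}\|_{\infty,[0,T]} \to 0$ as $n\to\infty$ on a full-measure event. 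I would obtain this by re-examining the proof in \cite{tran2015convergence}: its rate of convergence is governed by quantities like the modulus of continuity of $B$ and the derivative bounds $|g_t'|$, which can be taken uniform over the compact parameter set $\{\kappa_j\}\cup\{\kappa\}\subset(0,8/3)$ on an event of probability one (using that the relevant moment/tail estimates are locally uniform in $\kappa$). Granting that, the argument closes: given $\epsilon$, choose $n$ so the two outer terms are $<\epsilon/3$ uniformly in $j$ on the good event, then choose $j$ large (via the second display, with $n$ now fixed) so the middle term is $<\epsilon/3$.

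The main obstacle is precisely the uniformity-in-$\kappa$ of the approximation rate from \cite{tran2015convergence} — that paper establishes $\gamma^{n,\kappa}\to\gamma^\kappa$ for a single fixed $\kappa$, and one must check that its quantitative estimates degrade gracefully (and on a single almost-sure event) as $\kappa$ ranges over a compact subinterval of $(0,8/3)$. This requires tracking the $\kappa$-dependence of the constants in the trace-regularity and derivative estimates, and in the moment bounds used to pass from "good behaviour in expectation" to "good behaviour almost surely"; these are locally uniform in $\kappa$ away from the phase transition at $8/3$, but making that explicit is the delicate part. The Loewner stability estimate and the single-block computations are, by comparison, routine.
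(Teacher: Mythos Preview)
Your approach is essentially correct but is organized in the reverse order from the paper's, and there is one point where the tool you name does not directly apply.

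The paper does not run the $\epsilon/3$ argument through the interpolated curves. Instead it uses the two-term split
\[
|\gamma^{\kappa}(t)-\gamma^{\kappa_j}(t)| \le |\gamma^{\kappa}(t)-\gamma^{n,\kappa_j}(t)| + |\gamma^{n,\kappa_j}(t)-\gamma^{\kappa_j}(t)|
\]
and chooses $n = n(j) \to \infty$ diagonally. The second term is handled by Tran's theorem for $\kappa_j$; the first term compares the actual $SLE_\kappa$ trace to the interpolation at a \emph{different} parameter $\kappa_j$, and is estimated via the box lemmas (Lemmas 3.2, 3.3 of \cite{tran2015convergence}) together with the backward-flow stability lemma of Johansson~Viklund--Rohde--Wong. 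The key asymmetric input is that the uniform derivative bound $|\hat f_t'(iy)|\le C(\omega)y^{-\beta}$ (Corollary 4.2 of \cite{friz2019regularity}, uniform in $\kappa$ on compacts of $(0,8/3)$) is applied to the true SLE map, while only the trivial bound $|f'|\lesssim 1/y$ is used for the interpolated map. The resulting estimate contains terms of the form $|\sqrt{\kappa}-\sqrt{\kappa_j}|\cdot(\text{factor growing in }n)$, which is what forces the diagonal choice of $n(j)$. The second display of the theorem (continuity of the interpolated curves) is then deduced \emph{from} the first by the triangle inequality through the actual SLE traces, not the other way around.

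Your route inverts this: establish interpolated continuity first, then feed it into the middle of a three-term split. For \emph{fixed} $n$ the middle term is indeed elementary via the explicit composition of finitely many Christoffel--Schwarz maps, as you note. But the ``Loewner-type stability estimate'' you invoke at the \emph{trace} level requires derivative control on at least one of the two conformal maps involved, and the paper explicitly remarks that such bounds are not readily available for the composed interpolated maps --- this is precisely why it always keeps one leg on the true SLE map. So for your middle term you should commit to the explicit-composition argument (finitely many smooth maps, fixed $n$) and not to the stability lemma. For your outer terms, your identification of the main obstacle --- uniformity in $\kappa$ of Tran's convergence rate --- is exactly right, and the input that resolves it is again the Friz--Tran--Yuan uniform derivative estimate; once that is granted, Tran's proof runs verbatim with constants uniform over $\{\kappa_j\}\cup\{\kappa\}$, and your $\epsilon/3$ closes. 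The trade-off: the paper's diagonal approach needs only single-$\kappa$ Tran but a delicate mixed SLE/interpolated estimate; yours needs uniform-in-$\kappa$ Tran but keeps the three pieces structurally independent.
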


\section{Proof of the first main result}

\subsection{Deterministic results}

We will use a rather natural approximation algorithm that was introduced by Marshall and Rohde and subsequently used by many authors. The ODE structure of the Loewner evolution implies that $g_{t+s}$ can be written as a composition of two Loewner evolutions, one run by $\lambda$ on $[0,t]$ and the other by $\lambda$ on $[t,t+s]$. The same is true if we split the initial time interval into many small time interval. This suggests that if we can split the time interval (we will always use $[0,1]$ for the sake of simplicity) into small intervals and on each interval we approximate the driver by a simple function for which the Loewner evolution can be solved explicitly, then we can approximate the original Lowener evolution by a composition of many relatively simple explicit functions.

There are, essentially, only two cases where the Loewner evolution can be solved explicitly: when the driver is constant and when it is a multiple of the square root. If the driver function is H\"older-$1/2$ continuous or weakly H\"older-$1/2$ continuous, then  it is natural to approximate the driver by a piecewise square root function. To be more precise, we fix some integer $n$ and consider $t_k=k/n$, $k=0,\dots,n$. For a driving function $\lambda(t)$ we define the approximation $\lambda^n(t)$ which is defined by
\begin{equation}
\label{eq: lambda n def}
\lambda^n(t)=\sqrt{n}(\lambda(t_{k+1})-\lambda(t_k))\sqrt{t-t_k}+\lambda(t_k)\quad t\in [t_k, t_{k+1}].
\end{equation}
This is a piecewise square root function which coincides with $\lambda$ at all $t_j$. 

It is known \cite{kager2004exact} that if the driving function is of the form $\lambda(t)=c\sqrt{t}+d$
then the Loewner evolution can be solved explicitly and $g_t$ is a relatively simple Christoffel-Schwarz type function whose explicit form is not that important. What is important, is that the corresponding trace is a straight interval. Its length is proportional to $\sqrt{t}$ with constant which explicitly depends on $c$ and it makes an angle $\alpha\pi$ with the positive real axis where 
\[
\alpha=\frac{1}{2}-\frac{1}{2}\frac{c}{\sqrt{16+c^2}}.
\]

From now on we make some assumptions about the regularity of the driving function. 

\begin{assumption}
\label{assumption: osc}
A function $\lambda$ is  weakly H\"older-$1/2$ continuous. This means that there exists a subpower function $\phi$  (that is the function growing at infinity slower that any positive power) such that for all $\delta>0$
\begin{equation}
\label{eq: osc definition}
osc(\lambda, \delta):=\sup\{|\lambda(t)-\lambda(s)|: s, t \in [0,1], |t-s|\leq \delta\} \leq \sqrt{\delta}\phi\left(\frac{1}{\delta}\right) .
\end{equation}
\end{assumption}

\begin{assumption}
\label{assumption: derivative control}
There exist $c_0 > 0$,  $y_0 > 0$ and $0 < \beta < 1$ such that 
\[
|\hat{f}'_t(iy)|\le c_0y^{-\beta}, \quad \forall y\le y_0,
\]
where $\hat{f}_t(z)=g_t^{-1}(z+\lambda(t))$.
\end{assumption}

It is known \cite{viklund2011optimal} that if the Loewner evolution satisfies Assumptions \ref{assumption: osc} and \ref{assumption: derivative control}, then there is a trace.  Under the same assumptions Tran proved that LE trace generated by the driving function $\lambda^n$ converges to the trace generated by $\lambda$. 

\begin{theorem}[Theorem $2.2.$ of \cite{tran2015convergence}]\label{Ttran}
Let us assume that the driving function $\lambda(t)$ satisfies Assumptions \ref{assumption: osc} and \ref{assumption: derivative control}. Let $\lambda^n$ be a square root approximation defined by \eqref{eq: lambda n def}. Let $\gamma$ and $\gamma^n$ be the corresponding traces.  Then there exists  a sub-power function $\tilde{\phi}(n)$ which depends on $\phi$, $c_0$ and $\beta$ (from Assumptions mentioned above), such that for all $n \geq \frac{1}{y_0^2}$ and $t \in [0,1]$ we have that
\[
|\gamma^n(t)-\gamma(t) | \leq \frac{\tilde{\phi}(n)}{n^{\frac{1}{2}\left(1-\sqrt{\frac{1+\beta}{2}}\right)}}.
\]
\end{theorem}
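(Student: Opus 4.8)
The plan is to reduce the estimate to a comparison of the centred inverse maps $\hat f_t$ and $\hat f^n_t$ at a single, carefully chosen, mesoscopic height, and then to control that comparison by feeding the pointwise driver error $\|\lambda-\lambda^n\|_\infty$ into the Loewner flow while taming the singularity at the tip with the help of Assumptions~\ref{assumption: osc} and~\ref{assumption: derivative control}. This is the line of argument of \cite{tran2015convergence}.

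First I would fix $t\in[0,1]$ and, using $\gamma(t)=\lim_{y\to0}\hat f_t(iy)$ together with the corresponding identity for the approximation, write for a scale $y_n>0$ still to be chosen
\[
|\gamma^n(t)-\gamma(t)|\;\le\;|\gamma(t)-\hat f_t(iy_n)|\;+\;|\hat f_t(iy_n)-\hat f^n_t(iy_n)|\;+\;|\hat f^n_t(iy_n)-\gamma^n(t)|.
\]
The outer two terms form a ``boundary layer''. For the first, integrating Assumption~\ref{assumption: derivative control} radially gives $|\gamma(t)-\hat f_t(iy_n)|\le\int_0^{y_n}c_0 s^{-\beta}\,ds=\tfrac{c_0}{1-\beta}y_n^{1-\beta}$; this is where the hypothesis $n\ge y_0^{-2}$ is used, as it forces $y_n\le y_0$ so that the derivative bound is in force. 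For the third term I need the same derivative bound, with constants uniform in $n$, for the chains driven by $\lambda^n$. Establishing this is part of the work, but it is natural: on each $[t_k,t_{k+1}]$ the function $\lambda^n$ is an exact driver of the form $c\sqrt{\cdot}+d$, whose centred Loewner map and whose behaviour near the tip are explicit, so the bound follows by composing these explicit pieces and invoking the hypothesis on $\lambda$ itself.

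The heart of the matter is the middle term. I would run the backward Loewner flows generated by $\lambda(t-\cdot)$ and by $\lambda^n(t-\cdot)$ from the common starting point $z_0=\lambda^n(t)+iy_n$ and track $D_s:=h_s(z_0)-h^n_s(z_0)$. Two structural facts are at hand. The imaginary part is nondecreasing along the backward flow, so $\operatorname{Im}h_s(z_0)$ and $\operatorname{Im}h^n_s(z_0)$ stay $\ge y_n$, which keeps the fields $-2/(h_s-\lambda(t-s))$ and $-2/(h^n_s-\lambda^n(t-s))$ away from their poles; and on each subinterval $\|\lambda-\lambda^n\|_\infty\le 2\,osc(\lambda,1/n)\le 2n^{-1/2}\phi(n)$, directly from Assumption~\ref{assumption: osc} and~\eqref{eq: lambda n def}. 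A Gr\"onwall estimate using only these two facts pays a factor $\exp(Ct/y_n^2)$ and is useless; the point is instead that the flowing trajectory spends backward-time only of order $\rho^2$ within distance $\rho$ of the moving singularity, so the genuine amplification of $D_s$ is polynomial, not exponential, in $1/y_n$, with an exponent controlled through the derivative bound $|h_s'(z_0)|\lesssim y_n^{-\beta}$ of Assumption~\ref{assumption: derivative control} and the Koebe distortion estimate relating $|h_s'(z_0)|$, $y_n$ and the distance from $h_s(z_0)$ to the boundary. Carrying this out --- the technical core of \cite{tran2015convergence} --- produces a bound of the shape $|\hat f_t(iy_n)-\hat f^n_t(iy_n)|\lesssim \phi(n)\,n^{-1/2}\,y_n^{-\mu}$ for an explicit $\mu=\mu(\beta)>0$.

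Finally I would optimise the scale $y_n$ (together, if necessary, with a companion backward-time cutoff near the tip) so as to balance the flow term $\phi(n)n^{-1/2}y_n^{-\mu}$ against the boundary term $y_n^{1-\beta}$. Taking $y_n$ to be the resulting negative power of $n$ and collecting all slowly varying factors into a single subpower function $\tilde\phi$ yields precisely the exponent $\tfrac12\bigl(1-\sqrt{(1+\beta)/2}\bigr)$ of the statement. I expect the flow-comparison step to be the main obstacle: one must replace the lossy Gr\"onwall bound by a singularity-aware stability estimate for the backward Loewner equation that correctly accounts for how little time the trajectory spends near the moving pole, and one must keep track of the subpower factors accurately enough that the optimisation returns the sharp exponent rather than a weaker one. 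The boundary-layer estimates and the verification that the square-root approximations inherit Assumption~\ref{assumption: derivative control} uniformly in $n$ are comparatively routine once this core estimate is available.
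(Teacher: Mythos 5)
First, a point of reference: the paper does not prove this statement. It is Theorem 2.2 of \cite{tran2015convergence}, imported as an ingredient; what the paper does reproduce is the machinery of Tran's proof (the boxes $A_{n,c,\phi}$, Lemmas \ref{SLEpointlemma} and \ref{LTran2}, the hyperbolic-distance distortion estimate, and the driver-stability Lemma \ref{l: h uniform close}), and your sketch should be measured against that. Your overall architecture is broadly the right one: compare the two chains at a mesoscopic height $y_n\asymp n^{-1/2}$, feed $\|\lambda-\lambda^n\|_\infty\lesssim \phi(n)n^{-1/2}$ into a stability estimate for the backward flow whose amplification is polynomial of order $y_n^{-\sqrt{(1+\beta)/2}}$ rather than the exponential Gr\"onwall factor, and absorb the tip into boundary-layer terms of size $y_n^{1-\beta}$; since $\sqrt{(1+\beta)/2}>\beta$ the stability term dominates and produces the stated exponent. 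The ``singularity-aware'' estimate you postulate is exactly Lemma \ref{l: h uniform close} of Johansson Viklund--Rohde--Wong; its factor $\exp[\tfrac12(\log A_1\log A_2)^{1/2}]$ comes from integrating the linear ODE for the difference and bounding $\int\psi$ through $\log|h_T'|$, not literally through occupation times near the pole, but the outcome is the polynomial amplification you describe. (Also, the height is not really optimized: it is pinned to $\asymp n^{-1/2}$ by the box lemmas, and the final bound is just the dominant term at that scale.)

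The genuine gap is your third boundary-layer term $|\hat f^n_t(iy_n)-\gamma^n(t)|$. To control it by radial integration you need $|(\hat f^n_t)'(iy)|\le c\,y^{-\beta}$ for all $y\le y_n$, with constants uniform in $n$ and $t$, i.e.\ you need the approximating chains to inherit Assumption \ref{assumption: derivative control}. You call this ``comparatively routine \dots by composing these explicit pieces,'' but it is not: derivative bounds do not compose in any simple way (the paper itself remarks that the derivative of the composition of the explicit square-root maps ``is not easy to estimate directly''), and no such uniform estimate is available off the shelf. The actual proof is organized precisely to avoid this. Instead of evaluating both maps above their respective tips, one uses Lemmas \ref{SLEpointlemma} and \ref{LTran2} --- purely geometric statements about where the conformal images of the actual trace points land --- to find $z=\gamma_k(s)$ and $w=\gamma^n_k(r)$ in comparable boxes $A_{n,2\sqrt 2,\cdot}$, and then splits as $|\hat f_{t_k}(z)-\hat f_{t_k}(w)|+|\hat f_{t_k}(w)-\hat f^n_{t_k}(w)|$. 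The first term involves only the map for the true driver (Koebe distortion through the bounded hyperbolic distance of $z$ and $w$, plus Assumption \ref{assumption: derivative control}); in the second, the geometric mean $(\log A_1\log A_2)^{1/2}$ in Lemma \ref{l: h uniform close} means the good derivative bound is needed for only one of the two maps, and for the approximating chain the trivial bound $|(f^n)'(u)|\le C/\mathrm{Im}\,u$, valid for any conformal map of $\H$, suffices. Unless you can actually establish the uniform tip estimate for $\hat f^n_t$, you should restructure your decomposition along these lines; a separate uniform-continuity input (Lemma \ref{unifcont}) is then needed to pass from the particular times $s,r$ to all times.
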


This theorem shows that $\gamma^n$ converges uniformly to $\gamma$, moreover, we have a control of the rate of convergence in terms of $\beta$. 

Theorem \ref{Ttran} is one of the main ingredients in our proof. Beyond it, we will also need several technical results that were proved before. We reproduce them here for readers' convenience.

First, following \cite{tran2015convergence}, we define 
\begin{equation}
\label{eq: def of A}
A_{n,c, \phi}= \bigg\{ x+iy \in \mathbb{H}: |x| \leq \frac{\phi(n)}{\sqrt{n}}, \frac{1}{\sqrt{n}\phi(n)} \leq y \leq \frac{c}{\sqrt{n}} \bigg\}\,.
\end{equation}

To shorten many formulas we will use the following notations. Recall that $t_k=k/n$. We define $\gamma_k$ to be the image of $\gamma$ under $g_{t_k}-\lambda$, namely,
\[
\gamma_k(s)=g_{t_k}(\gamma(t_k+s))-\lambda(t_k), \quad 0\le s\le 1-t_k.
\]   
In the same way we define
\[
\gamma_k^n(s)=g^n_{t_k}(\gamma^n(t_k+s))-\lambda^n(t_k), \quad 0\le s\le 1-t_k.
\]

We would like to say that  $\gamma_k(1/n)$ is in some $A_{n,c,\psi}$, but unfortunately this might be false. Instead we have the following Lemma. 
 
\begin{lemma}[Lemma $3.2$ of \cite{tran2015convergence}]\label{SLEpointlemma}
There exists a subpower function $\psi$ depending only on $\phi, c_0$ and $\beta$ (as in Assumptions \ref{assumption: osc} and \ref{assumption: derivative control}) such that for $n \geq 1$ and $0 \leq k \leq n-1$, there exists $s \in [0, \frac{2}{n}]$ such that $\gamma_k(s) \in A_{n, 2\sqrt{2}, \psi}$.
\end{lemma}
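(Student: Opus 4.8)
The plan is to realise $\gamma_k$ as an honest Loewner trace with a driver of small oscillation, read off a couple of cheap a priori bounds, and then use a half-plane capacity estimate to force its imaginary part through the prescribed window on a small initial time interval.

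By the concatenation rule for Loewner chains, $g_{t_k+s}=\tilde g_s\circ g_{t_k}$ where $\tilde g$ is driven by $u\mapsto\lambda(t_k+u)$; consequently $g_{t_k}(\gamma(t_k+s))$ is the trace of $\tilde g$ at time $s$, and subtracting the constant $\lambda(t_k)$ only translates the picture, so $\gamma_k$ is precisely the trace of the Loewner evolution driven by $\mu(u):=\lambda(t_k+u)-\lambda(t_k)$. Here $\mu(0)=0$, and by Assumption~\ref{assumption: osc} one has $\sup_{u\in[0,s]}|\mu(u)|\le osc(\lambda,s)\le osc(\lambda,1/n)\le\phi(n)/\sqrt n$ for $s\le 1/n$; Assumption~\ref{assumption: derivative control}, which transfers (with controlled constants) to the shifted chain, is what guarantees that $\gamma_k$ is a well-defined continuous curve enjoying the usual a priori estimates. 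Three such facts will be used, all on $0\le s\le 1/n\le 1-t_k$: (i) the standard hull bound $|\gamma_k(s)|\le C_1(\sqrt s+\sup_{[0,s]}|\mu|)\le \psi_1(n)/\sqrt n$, where $C_1$ is absolute and $\psi_1(n):=C_1(1+\phi(n))$ is subpower, so in particular $|\mathrm{Re}\,\gamma_k(s)|\le\psi_1(n)/\sqrt n$; (ii) the elementary bound $K_t\subseteq\{\mathrm{Im}\,z\le 2\sqrt t\}$, coming from $\partial_t(\mathrm{Im}\,g_t)^2\ge-4$, which gives $\mathrm{Im}\,\gamma_k(s)\le 2\sqrt s\le 2/\sqrt n<2\sqrt2/\sqrt n$; and (iii) $\operatorname{hcap}(K^k_s)=2s$ for the hull $K^k_s$ of $\gamma_k$ on $[0,s]$.

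Now fix any subpower $\psi\ge\max(\psi_1,N_0)$ with $N_0$ a large absolute constant, and put $\delta:=1/(\sqrt n\,\psi(n))$ and $L:=\psi(n)/\sqrt n$, so that $L\delta=1/n$ and $\delta/L=1/\psi(n)^2$ is as small as we please. I claim $\mathrm{Im}\,\gamma_k(s_0)\ge\delta$ for some $s_0\in[0,1/n]$. If not, $\gamma_k([0,1/n])$ lies in the thin rectangle $R:=[-L,L]\times[0,\delta]$, hence so does its hull $K^k_{1/n}$, and therefore $2/n=\operatorname{hcap}(K^k_{1/n})\le\operatorname{hcap}(R)$. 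But by the harmonic-measure representation of half-plane capacity, $\operatorname{hcap}([-L,L]\times[0,\delta])\le C_0 L\delta$ with an absolute constant $C_0<2$ once $\delta/L$ is small enough (the sharp constant in the thin limit being $4/\pi$); this forces $2/n\le C_0/n$, a contradiction. So some $s_0\le 1/n$ has $\gamma_k(s_0)\notin R$, and since $|\mathrm{Re}\,\gamma_k(s_0)|\le L$ by (i), necessarily $\mathrm{Im}\,\gamma_k(s_0)\ge\delta$.

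Finally, $s\mapsto\mathrm{Im}\,\gamma_k(s)$ is continuous with value $0$ at $s=0$, so by the intermediate value theorem it has a first hitting time $s^{*}\in(0,s_0]\subseteq(0,2/n]$ of the level $\delta$. At $s=s^{*}$ we get $\mathrm{Im}\,\gamma_k(s^{*})=\delta=1/(\sqrt n\,\psi(n))$, which lies in $[\,1/(\sqrt n\,\psi(n)),\,2\sqrt2/\sqrt n\,]$, while $|\mathrm{Re}\,\gamma_k(s^{*})|\le L=\psi(n)/\sqrt n$ because $s^{*}\le 1/n$; that is, $\gamma_k(s^{*})\in A_{n,2\sqrt2,\psi}$. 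Chasing the dependencies shows $\psi$ may be taken subpower and depending only on $\phi,c_0,\beta$ (the parameters $c_0,\beta$ entering only through the transfer of Assumption~\ref{assumption: derivative control} needed in the first paragraph), and the whole argument is uniform in $n\ge 1$ and in $0\le k\le n-1$. The step I expect to be the main obstacle is the capacity input in the third paragraph: one needs the half-plane capacity of the thin band to be strictly below $\operatorname{hcap}(\gamma_k([0,1/n]))=2/n$, so the constant $C_0$ genuinely has to be pinned below $2$; and one must check with care that Assumption~\ref{assumption: derivative control} really does pass to the shifted chains with constants uniform in $k$ and $n$, so that the a priori bounds and the continuity invoked above hold with the advertised uniformity.
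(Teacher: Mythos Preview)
The paper does not give a proof of this lemma; it is quoted from \cite{tran2015convergence} without argument, so there is no in-paper proof to compare your attempt against.

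That said, your argument is essentially correct and is the natural route to this kind of statement: confine the hull in a thin box using the Loewner diameter and height bounds, then use a half-plane capacity comparison to force the imaginary part above the threshold, and finish with the intermediate value theorem. Two small comments.

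First, the sharp constant you quote for the thin rectangle is $2/\pi$, not $4/\pi$. One clean way to see this is the identity
\[
\operatorname{hcap}(A)=\frac{1}{\pi}\int_{\mathbb R}\operatorname{Im}\,g_A^{-1}(x)\,dx:
\]
for $R=[-L,L]\times[0,\delta]$ the boundary values of $g_R^{-1}$ lie in $[0,\delta]$ and equal $\delta$ precisely on the image of the top edge, an interval of length $2L(1+o(1))$ as $\delta/L\to 0$, giving $\operatorname{hcap}(R)=\tfrac{2}{\pi}L\delta\,(1+o(1))$. Either constant is strictly below $2$, so once $\psi\ge N_0$ is chosen large enough to make $\delta/L=1/\psi(n)^2$ small, your contradiction $2/n\le C_0/n$ with $C_0<2$ goes through.

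Second, your final caveat overstates what is needed. Assumption~\ref{assumption: derivative control} enters only to guarantee that the \emph{original} trace $\gamma$ exists and is continuous; since $\gamma_k(s)=g_{t_k}(\gamma(t_k+s))-\lambda(t_k)$ is then automatically continuous, the intermediate value step requires no separate ``transfer'' of the derivative bound to the shifted chain. The three inputs (i)--(iii) you use are pure Loewner facts with absolute constants, so in fact your $\psi$ depends only on $\phi$; the stated dependence on $c_0,\beta$ in the lemma is not sharp for this argument. Note also that your $s_0\le 1/n$ gives $s^*\in(0,1/n]$, slightly stronger than the $[0,2/n]$ claimed.
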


For $\gamma_k^n$ we have a similar, but slightly simpler, estimate.

\begin{lemma}[Lemma $3.3$ of \cite{tran2015convergence}]\label{LTran2}
There exists a subpower function $\tilde{\psi}$ depending only on $\phi, c_0$ and $\beta$ (as in Assumptions \ref{assumption: osc} and \ref{assumption: derivative control}) such that $\gamma_k^n(r)$ is in the box $A_{n, 2\sqrt{2}, \tilde{\psi}}$ for $n \geq 1,$ $0 \leq k \leq n-1$ and $r \in [\frac{1}{n}, \frac{2}{n}]$.
\end{lemma}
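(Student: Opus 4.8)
The plan is to exploit the fact that on each interval $[t_k,t_{k+1}]$ the interpolating driver $\lambda^n$ is, up to the additive constant $\lambda^n(t_k)=\lambda(t_k)$, a pure multiple of $\sqrt{\,\cdot\,}$, so that on $[0,1/n]$ the curve $\gamma_k^n$ traces a completely explicit straight slit and the box $A_{n,2\sqrt2,\tilde\psi}$ is, in effect, tailored to capture the tip of such a slit after a time of order $1/n$. First I would use the composition (flow) property of the Loewner equation to identify $\gamma_k^n$ with the trace of the evolution driven by the recentred function $\mu(s):=\lambda^n(t_k+s)-\lambda^n(t_k)$. Plugging in \eqref{eq: lambda n def} and using $\lambda^n(t_j)=\lambda(t_j)$, and writing $\Delta_j:=\lambda(t_{j+1})-\lambda(t_j)$, one finds $\mu(s)=(\sqrt n\,\Delta_k)\sqrt s$ on $[0,1/n]$ and $\mu(s)=(\sqrt n\,\Delta_{k+1})\sqrt{s-1/n}+\Delta_k$ on $[1/n,2/n]$. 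By Assumption \ref{assumption: osc}, $|\Delta_j|\le osc(\lambda,1/n)\le\phi(n)/\sqrt n$, so $\mu(0)=0$, $\sup_{[0,2/n]}|\mu|\le 2\phi(n)/\sqrt n$, and the two square-root coefficients $\sqrt n\,\Delta_k$ and $\sqrt n\,\Delta_{k+1}$ have modulus at most $\phi(n)$. (When $t_k+2/n>1$, i.e.\ $k=n-1$, one simply extends $\lambda^n$ past $1$ by the constant $\lambda(1)$; this does not affect any of the bounds.)

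With this reduction in hand, the three constraints defining $A_{n,2\sqrt2,\tilde\psi}$ are dealt with one at a time. For the upper bound on the imaginary part I would use the elementary a priori estimate $\partial_s(\mathrm{Im}\,g_s(z))^2=-4(\mathrm{Im}\,g_s(z))^2/|g_s(z)-\mu(s)|^2\ge-4$, valid for every driver, which forces $z\notin K_r$ as soon as $\mathrm{Im}\,z>2\sqrt r$, i.e.\ $K_r\subseteq\{\,0<\mathrm{Im}\le 2\sqrt r\,\}$; hence $\mathrm{Im}\,\gamma_k^n(r)\le 2\sqrt r\le 2\sqrt2/\sqrt n$ for $r\le 2/n$, which is exactly the upper constraint in $A_{n,2\sqrt2,\cdot}$. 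For the real part I would invoke the standard a priori bound on Loewner hulls, which keeps $K_r$ within a universal multiple of $\sqrt r$ of the range $[\min_{[0,r]}\mu,\max_{[0,r]}\mu]$ of the driver; since this range contains $0$ and has length $\le 2\phi(n)/\sqrt n$, we get $|\gamma_k^n(r)|\le C\phi(n)/\sqrt n$, in particular $|\mathrm{Re}\,\gamma_k^n(r)|\le C\phi(n)/\sqrt n$. Both of these fit inside the box as soon as $\tilde\psi$ dominates $C\phi$.

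The one genuinely delicate point — and where I expect the main obstacle — is the lower bound $\mathrm{Im}\,\gamma_k^n(r)\ge 1/(\sqrt n\,\tilde\psi(n))$, i.e.\ the requirement that the tip stay a definite distance from $\R$. For $r=1/n$ this is explicit: by \cite{kager2004exact} the curve $\gamma_k^n|_{[0,1/n]}$ is the straight segment from $0$ at angle $\alpha(\sqrt n\Delta_k)\pi$, with $\alpha(c)=\tfrac12-\tfrac{c}{2\sqrt{16+c^2}}$, whose tip $\gamma_k^n(1/n)$ lies at distance $\ell(\sqrt n\Delta_k)\,n^{-1/2}$ from the origin. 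A direct inspection of the explicit functions $\ell(c)$ and $\alpha(c)$ shows that $\ell(c)$ is bounded below by a positive constant and $\sin(\alpha(c)\pi)\gtrsim(1+|c|)^{-2}$, so that $\mathrm{Im}\,\gamma_k^n(1/n)=\ell(\sqrt n\Delta_k)\sin(\alpha(\sqrt n\Delta_k)\pi)\,n^{-1/2}\gtrsim 1/(\phi(n)^2\sqrt n)$, which already matches the shape of the lower constraint. Propagating this bound to all $r\in(1/n,2/n]$ is the crux, and I see two routes. The first is to invoke a general Loewner lower estimate of the form $\mathrm{Im}\,\gamma(t)\gtrsim t/(\Omega+\sqrt t)$, where $\Omega$ is the oscillation of the driver on $[0,t]$: applied to $\mu$ on $[0,r]$, with $r\ge 1/n$ and $\Omega\le 2\phi(n)/\sqrt n$, this gives the bound for every $r$ at one stroke, provided such an estimate is available in this uniform form. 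The second, more hands-on, route uses the flow decomposition $\gamma_k^n(r)=G^{-1}\big(\eta(r-1/n)+\Delta_k\big)$, where $G$ is the hydrodynamically normalized uniformizer of $\H$ minus the first slit and $\eta$ is the second (explicit) square-root slit: for $r$ close to $1/n$ one uses that $\Delta_k$ is the prime end of the first tip and that $G^{-1}$ has there a square-root branch point, so that $|\gamma_k^n(r)-\gamma_k^n(1/n)|$ stays much smaller than $\mathrm{Im}\,\gamma_k^n(1/n)$; while for $r$ bounded away from $1/n$ the point $\eta(r-1/n)+\Delta_k$ has already acquired an imaginary part bounded below by a reciprocal power of $\phi(n)$ times $n^{-1/2}$, which is transported to $\gamma_k^n(r)$ up to a Koebe distortion factor.

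Obtaining the correct power of $\phi(n)$ and — above all — a bound that is uniform over $r\in[1/n,2/n]$ and over all $n\ge1$ is the technical heart of the argument, especially reconciling the two regimes $r\approx 1/n$ and $r$ bounded away from $1/n$ in the second route. The finitely many $n$ below the resulting threshold are then disposed of by compactness: for each such $n$ the set $\{\gamma_k^n(r):0\le k\le n-1,\ r\in[\tfrac1n,\tfrac2n]\}$ is a compact subset of the open upper half-plane (for the piecewise-square-root driver $\mu$ each trace point at strictly positive time lies in $\H$), so its points have imaginary part bounded below and modulus bounded above by positive constants. Taking $\tilde\psi$ to be any subpower function dominating all the constants produced above then yields $\gamma_k^n(r)\in A_{n,2\sqrt2,\tilde\psi}$ for all $n\ge1$, $0\le k\le n-1$ and $r\in[\tfrac1n,\tfrac2n]$, as claimed.
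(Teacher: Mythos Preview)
The paper does not give any proof of this lemma: it is quoted verbatim as Lemma~3.3 of \cite{tran2015convergence} and used as a black box, so there is nothing in the present paper to compare your argument against.

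That said, your sketch follows exactly the line Tran takes: reduce via the flow property to the two-step driver $\mu$ built from square roots, get the upper height and horizontal bounds from the standard half-plane capacity/diameter estimates, and obtain the lower height bound from the explicit Kager--Nienhuis--Kadanoff slit formulas combined with a distortion argument to pass from $r=1/n$ to $r\in(1/n,2/n]$. Your ``second route'' is essentially Tran's; your ``first route'' (a putative universal lower bound $\mathrm{Im}\,\gamma(t)\gtrsim t/(\Omega+\sqrt t)$) is not something you can simply quote, so you would end up doing the explicit computation anyway.

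One genuine slip: the final compactness paragraph, where you absorb finitely many small $n$ by taking the infimum of $\mathrm{Im}\,\gamma_k^n(r)$ over a compact set, produces constants that depend on the particular driver $\lambda$ (through the increments $\Delta_j$), not merely on $\phi,c_0,\beta$. That would contradict the uniformity claimed in the statement. The fix is simply to observe that your earlier quantitative bounds (which only involve $\phi(n)$) already hold for every $n\ge 1$, so no separate small-$n$ argument is needed; drop that paragraph.
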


We will need the following result describing the uniform continuity of traces.

\begin{lemma}[Proposition $3.8$ of \cite{viklund2011optimal}]\label{unifcont}
Let us consider a Loewner evolution satisfying Assumptions \ref{assumption: osc} and \ref{assumption: derivative control}. Then, there exists a subpower function $\phi_1$ such that if $0 \leq t \leq t+s \leq 1$, we have that
\begin{align}
|\gamma(t+s)-\gamma(t)|\leq \phi_1\left(\frac{1}{y}\right)\frac{2}{1-\beta}y^{1-\beta}
\end{align} 
for $0 \leq s \leq y^2 \leq y_0^2$.
\end{lemma}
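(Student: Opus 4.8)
The plan is to compare the two tips through an auxiliary point on the imaginary axis at a carefully chosen height $Y\geq y$: writing $\gamma(\tau)=\lim_{r\to 0^+}\hat f_\tau(ir)$ (the trace exists under Assumptions \ref{assumption: osc}--\ref{assumption: derivative control}), I would split
\[
\gamma(t+s)-\gamma(t)=\br{\gamma(t+s)-\hat f_{t+s}(iY)}+\br{\hat f_{t+s}(iY)-\hat f_{t}(iY)}+\br{\hat f_{t}(iY)-\gamma(t)}.
\]
The two outer differences are vertical displacements of a single map, so for $Y\leq y_0$ Assumption \ref{assumption: derivative control} gives them directly:
\[
|\gamma(\tau)-\hat f_\tau(iY)|\leq\int_0^Y|\hat f_\tau'(ir)|\,dr\leq\int_0^Y c_0 r^{-\beta}\,dr=\frac{c_0}{1-\beta}\,Y^{1-\beta},\qquad \tau\in\{t,\,t+s\},
\]
so the whole difficulty sits in the middle term and in the choice of $Y$.

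For the middle term I would use the concatenation $g_{t+s}=\tilde g_s\circ g_t$, where $\tilde g_s$ is the Loewner map on $[0,s]$ with driver $u(r)=\lambda(t+r)$; this rewrites $\hat f_{t+s}(iY)=\hat f_t(w')$ with $w':=\tilde g_s^{-1}(iY+\lambda(t+s))-\lambda(t)$. The elementary Loewner bound $|\tilde g_s^{-1}(z)-z|\leq 2s/\operatorname{Im}(z)$ — which holds because $\operatorname{Im}$ decreases along the forward flow, so $\operatorname{Im}(\tilde g_r(\tilde g_s^{-1}(z)))\geq\operatorname{Im}(z)$, hence $|\tilde g_r(\tilde g_s^{-1}(z))-u(r)|\geq\operatorname{Im}(z)$, for all $r\leq s$ — together with Assumption \ref{assumption: osc} and monotonicity of $osc$ in its second argument yields
\[
|w'-iY|\leq\frac{2s}{Y}+osc(\lambda,s)\leq\frac{2y^2}{Y}+osc(\lambda,y^2)\leq\frac{2y^2}{Y}+y\,\phi(1/y^2),
\]
using $s\leq y^2$. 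Now choose $Y:=10\,y\br{1+\phi(1/y^2)}$. Then $Y\geq 10y$ forces $|w'-iY|\leq\tfrac{y}{5}+y\phi(1/y^2)\leq Y/10$, so $w'$ and the segment $[iY,w']$ lie in the disk $D(iY,Y/10)$ of radius $Y/10$ about $iY$, on which the Koebe distortion theorem — applied on $D(iY,Y)\subset\H$, where $\hat f_t$ is conformal — gives $|\hat f_t'|\leq 2|\hat f_t'(iY)|$. Hence
\[
|\hat f_{t+s}(iY)-\hat f_{t}(iY)|=|\hat f_t(w')-\hat f_t(iY)|\leq 2|w'-iY|\,|\hat f_t'(iY)|\leq 2\,y\br{1+\phi(1/y^2)}\,c_0 Y^{-\beta}\leq 2c_0\br{1+\phi(1/y^2)}\,y^{1-\beta},
\]
where the last step uses $Y\geq 10y$. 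Feeding $Y\leq 10\,y\br{1+\phi(1/y^2)}$ back into the two outer terms and collecting constants gives
\[
|\gamma(t+s)-\gamma(t)|\leq\frac{22\,c_0}{1-\beta}\br{1+\phi(1/y^2)}\,y^{1-\beta},
\]
which is the claimed bound with $\phi_1(x):=11\,c_0\br{1+\phi(x^2)}$; this $\phi_1$ depends only on $\phi,c_0,\beta$ and is subpower, since $x\mapsto\phi(x^2)$ is (because $\phi(x^2)/x^\epsilon=\phi(x^2)/(x^2)^{\epsilon/2}\to 0$).

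The step I expect to be the real obstacle is precisely the choice of the auxiliary height $Y$. One cannot simply take $Y=y$: although $osc(\lambda,s)\leq\sqrt s\leq y$ up to the subpower factor $\phi(1/y^2)$, that factor need not be bounded, so the horizontal perturbation $|w'-iY|$ may exceed $y$, and then transporting the derivative bound across that distance by chaining Koebe estimates along a horizontal segment would cost a factor exponential in $\phi$, which is no longer subpower. Lifting to $Y=\Theta\br{y\,\phi(1/y^2)}$ is exactly the scale at which the Loewner perturbation becomes small relative to the height, so that a single distortion estimate suffices, while $Y^{1-\beta}$ stays $\lesssim y^{1-\beta}$ up to a subpower; balancing these two requirements is the heart of the argument. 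One loose end to clean up afterwards: the inequality $|\hat f_\tau'(iY)|\leq c_0 Y^{-\beta}$ needs $Y\leq y_0$, which holds once $y$ is small enough; for $y$ in the remaining bounded range one instead uses the crude bound $|\gamma(t+s)-\gamma(t)|\leq\operatorname{diam}\gamma([0,1])$, with $\operatorname{diam}\gamma([0,1])$ controlled in terms of $\phi$ through the half-plane capacity, and enlarges $\phi_1$ by a constant.
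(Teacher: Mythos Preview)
The paper does not supply a proof of this lemma: it is quoted verbatim as Proposition~3.8 of \cite{viklund2011optimal} and used as a black box. Your proposal is therefore not being compared against a proof in the present paper, but it is a correct reconstruction of the standard argument behind the cited result.

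Your three-term decomposition via $\hat f_\tau(iY)$, the vertical integral $\int_0^Y|\hat f_\tau'(ir)|\,dr$ for the outer terms, and the rewriting $\hat f_{t+s}(iY)=\hat f_t(w')$ through the concatenation $g_{t+s}=\tilde g_s\circ g_t$ are exactly the ingredients one expects. The displacement bound $|\tilde g_s^{-1}(z)-z|\le 2s/\operatorname{Im}z$ and the Koebe distortion step are both sound, and the arithmetic leading to $|w'-iY|\le Y/10$ and the final constant $22c_0/(1-\beta)$ checks out. The crucial point you correctly isolate is the choice of height: taking $Y\asymp y\,\phi(1/y^2)$ rather than $Y=y$ is precisely what makes the horizontal perturbation small relative to the height, so a single distortion estimate suffices and no iterated Koebe chain is needed. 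This matches the mechanism in \cite{viklund2011optimal}. The residual issue that $Y\le y_0$ may fail for $y$ near $y_0$ is real but minor; your proposed fix (absorb the finite diameter of $\gamma([0,1])$ into $\phi_1$ on that bounded range) is adequate, though one could also observe that since $Y/y=10(1+\phi(1/y^2))$ is subpower in $1/y$, there is a threshold $y_1\in(0,y_0]$ depending only on $\phi$ and $y_0$ below which $Y\le y_0$, and then enlarge $\phi_1$ by a constant depending on $y_1$.
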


Finally, we will need a result stating that Loewner evolutions with close drivers are close to each other away from the real line. 

\begin{lemma}[Lemma $2.3$ of \cite{viklund2014continuity}]
\label{l: h uniform close}
Let $0<T<\infty$. Suppose that for $t \in [0,T]$, $h_t^{(1)}$ and $h_t^{(2)}$ satisfy the backward Loewner differential equation \eqref{eq: backward LE} with drivers $\lambda_t^{(1)}$ and $\lambda_t^{(2)}$. Let 
\[
\epsilon=\sup_{s \in [0,T]}|\lambda_s^{(1)}-\lambda_s^{(2)}|.
\]
Then for $u=x+iy\in\H$ we have
\begin{multline*}
|h^{(1)}_T(u)-h^{(2)}_T(u)| 
\\
\leq \epsilon \exp \left[\frac{1}{2}\left( \log \frac{I_{T, y}|(h^{(1)}_{T})^{'}(u)|}{y}\log\frac{I_{T, y}|(h^{(2)}_{T})^{'}(u)|}{y} \right)^{1/2} +\log\log \frac{I_{T, y}}{y}\right],
\end{multline*}
where $I_{T,y}=\sqrt{4T+y^2}$.
\end{lemma}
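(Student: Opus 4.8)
The plan is to control the difference $\delta_t := h^{(1)}_t(u) - h^{(2)}_t(u)$ directly through the Loewner ODE. Abbreviate $Z^{(j)}_t := h^{(j)}_t(u) - \lambda^{(j)}_t$; these never vanish because $\operatorname{Im} Z^{(j)}_t = \operatorname{Im} h^{(j)}_t(u)$ will be seen to stay $\ge y$. Subtracting the two copies of \eqref{eq: backward LE} gives the linear inhomogeneous ODE
\[
\partial_t \delta_t \;=\; \frac{2}{Z^{(1)}_t Z^{(2)}_t}\Bigl(\delta_t - \bigl(\lambda^{(1)}_t - \lambda^{(2)}_t\bigr)\Bigr), \qquad \delta_0 = 0,
\]
the initial condition holding since $h^{(j)}_0(u) = u$ for both $j$. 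I would then solve by variation of constants,
\[
\delta_T \;=\; -\int_0^T \frac{2\bigl(\lambda^{(1)}_s - \lambda^{(2)}_s\bigr)}{Z^{(1)}_s Z^{(2)}_s}\,\exp\!\Bigl(\int_s^T \frac{2\,dr}{Z^{(1)}_r Z^{(2)}_r}\Bigr)\,ds,
\]
and, using $|\lambda^{(1)}_s - \lambda^{(2)}_s|\le\epsilon$, the bound $|e^{w}| = e^{\operatorname{Re} w}\le e^{|w|}$, and the Grönwall identity $\int_0^T |a(s)|\,e^{\int_s^T |a(r)|\,dr}\,ds = e^{A}-1$, arrive at
\[
|\delta_T| \;\le\; \epsilon\bigl(e^{A}-1\bigr), \qquad A := \int_0^T \frac{2\,dr}{|Z^{(1)}_r|\,|Z^{(2)}_r|}.
\]

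Everything then reduces to estimating $A$ via the geometry of the backward flow. Differentiating along \eqref{eq: backward LE} gives $\partial_t \operatorname{Im} h^{(j)}_t(u) = 2\operatorname{Im} h^{(j)}_t(u)/|Z^{(j)}_t|^2 \ge 0$, so $\operatorname{Im} h^{(j)}_t(u)$ is non-decreasing and $|Z^{(j)}_t|\ge \operatorname{Im} h^{(j)}_t(u)\ge y$; since also $\partial_t (\operatorname{Im} h^{(j)}_t(u))^2 \le 4$, we get $\operatorname{Im} h^{(j)}_t(u)\le\sqrt{y^2+4T}=I_{T,y}$. The same computation reads $\partial_t\log \operatorname{Im} h^{(j)}_t(u) = 2/|Z^{(j)}_t|^2$, hence $\int_0^T 2\,dr/|Z^{(j)}_r|^2 = \log\bigl(\operatorname{Im} h^{(j)}_T(u)/y\bigr)$; differentiating the conformal derivative gives $\partial_t\log|(h^{(j)}_t)'(u)| = \operatorname{Re}\bigl(2/(Z^{(j)}_t)^2\bigr)$, which together with the previous identity controls $|(h^{(j)}_T)'(u)|$ through the angular part of $Z^{(j)}_t$. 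I would apply Cauchy–Schwarz to separate the two factors of $A$ and then trade each $\int_0^T 2\,dr/|Z^{(j)}_r|^2$ against $\log\bigl(I_{T,y}|(h^{(j)}_T)'(u)|/y\bigr)$ using these relations, absorbing the crude bound $\operatorname{Im} h^{(j)}_T(u)\le I_{T,y}$ into an additive $\log\log(I_{T,y}/y)$ term. This produces
\[
A \;\le\; \tfrac12\Bigl(\log\tfrac{I_{T,y}|(h^{(1)}_T)'(u)|}{y}\,\log\tfrac{I_{T,y}|(h^{(2)}_T)'(u)|}{y}\Bigr)^{1/2} + \log\log\tfrac{I_{T,y}}{y},
\]
which, substituted into $|\delta_T|\le\epsilon\,e^{A}$, is the claim.

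The main obstacle is precisely this last trade-off. With only the bound $\operatorname{Im} h^{(j)}_T(u)\le I_{T,y}$ one already gets $A\le\log(I_{T,y}/y)$, hence $|\delta_T|\lesssim \epsilon\,I_{T,y}/y$, which has the right order but not the refined, derivative-sensitive form above; extracting the factors $|(h^{(j)}_T)'(u)|$ forces one to track how $\arg Z^{(j)}_t$ enters $\int 2\,dr/|Z^{(j)}_r|^2$ and $\int \operatorname{Re}\bigl(2/(Z^{(j)}_r)^2\bigr)\,dr$ simultaneously, and to bookkeep the errors so that only the $\log\log$ correction survives. Once the estimate for $A$ is in place, the remainder is the routine Grönwall step of the first paragraph.
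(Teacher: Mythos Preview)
Your setup via the ODE for $\delta_t$ and the integrating factor is exactly the route the paper (following \cite{viklund2014continuity}) takes. The gap is in the step where you pass from $|e^{w}|=e^{\operatorname{Re}w}$ to $e^{|w|}$: this discards precisely the angular information you need, and the resulting bound
\[
A \;=\;\int_0^T \frac{2\,dr}{|Z^{(1)}_r|\,|Z^{(2)}_r|}\;\le\;\tfrac12\Bigl(\log\tfrac{I_{T,y}|(h^{(1)}_T)'(u)|}{y}\,\log\tfrac{I_{T,y}|(h^{(2)}_T)'(u)|}{y}\Bigr)^{1/2}+\log\log\tfrac{I_{T,y}}{y}
\]
is simply false. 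Take both drivers $\equiv 0$ and $u=iy$: then $Z^{(j)}_r=i\sqrt{y^2+4r}$, so $A=\log(I_{T,y}/y)$, while $|(h^{(j)}_T)'(iy)|=y/I_{T,y}$ makes the square-root term vanish, leaving only $\log\log(I_{T,y}/y)$ on the right. For large $T/y^2$ this is strictly smaller than $A$.

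The repair is to keep the real part. Writing $\psi_r=2/(Z^{(1)}_rZ^{(2)}_r)$ and $\theta^{(j)}_r=\arg Z^{(j)}_r\in(0,\pi)$, one has
\[
\operatorname{Re}\psi_r=\frac{2\cos(\theta^{(1)}_r+\theta^{(2)}_r)}{|Z^{(1)}_r||Z^{(2)}_r|}
=\frac{2\bigl(\cos\theta^{(1)}_r\cos\theta^{(2)}_r-\sin\theta^{(1)}_r\sin\theta^{(2)}_r\bigr)}{|Z^{(1)}_r||Z^{(2)}_r|}
\le\frac{2|\cos\theta^{(1)}_r|\,|\cos\theta^{(2)}_r|}{|Z^{(1)}_r||Z^{(2)}_r|},
\]
since $\sin\theta^{(j)}_r>0$. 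Cauchy--Schwarz and your own identities $\int_0^T 2|Z^{(j)}_r|^{-2}\,dr=\log(\operatorname{Im}h^{(j)}_T(u)/y)$ and $\int_0^T\operatorname{Re}\bigl(2/(Z^{(j)}_r)^2\bigr)\,dr=\log|(h^{(j)}_T)'(u)|$ combine (add them: $1+\cos2\theta=2\cos^2\theta$) to give
\[
\sup_{0\le s\le T}\int_s^T\operatorname{Re}\psi_r\,dr\;\le\;\tfrac12\Bigl(\log\tfrac{I_{T,y}|(h^{(1)}_T)'(u)|}{y}\,\log\tfrac{I_{T,y}|(h^{(2)}_T)'(u)|}{y}\Bigr)^{1/2}.
\]
Then bound $|\delta_T|\le\epsilon\,e^{\sup_s\int_s^T\operatorname{Re}\psi}\int_0^T|\psi_s|\,ds$, and the remaining factor $\int_0^T|\psi_s|\,ds\le\log(I_{T,y}/y)$ (this is your crude Cauchy--Schwarz bound on $A$) supplies the additive $\log\log(I_{T,y}/y)$. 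So the $\log\log$ term does not come from ``absorbing errors'' in the estimate of $A$; it is the prefactor $\int|\psi|$ itself, and the derivative-sensitive square root comes entirely from $\operatorname{Re}\psi$, which you threw away.
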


We will need a slight modification of this result, i.e. we need to apply it for two different staring points $u_1$ and $u_2$ with the same imaginary value $y=\text{Im}u_1=\text{Im}u_2$. Let $z_s^{(j)}:=h_s^{(j)}(z)-\lambda^{(j)}_s$, for $j=1,2$. Following, the proof of the result in \cite{viklund2014continuity}, one can integrate the differential equation for $H(s)=h_s^{(1)}(z)-h_s^{(2)}(z)$, i.e. 
$$\dot{H}(s)-H(s)\psi(s)=(\lambda_s^{(2)}-\lambda_s^{(1)}) \psi(s),$$ with $\psi(s)=\frac{2}{z_s^{(1)}z_s^{(2)}}.$ 
The solution to this equation is
\begin{align}
H(s)=u(s)^{-1}\left(H(0)+\int_0^s(\lambda_r^{(2)}-\lambda_r^{(1)})u(r)\psi(r)dr\right),
\end{align}
with $u(s)=\exp \left(-\int_0^r \psi(s)ds\right).$
The estimate in Lemma \ref{l: h uniform close} is obtained for $H(0)=0$.
One can show (see \cite{viklund2014continuity}) that the following bound holds $u(s)^{-1}=\exp \left(\int_0^r \psi(s)ds\right) \leq \frac{(4s+y^2)^{1/2}}{y}$. Thus, when the two initial conditions have different real parts, one obtains and additional factor and the estimate reads
\begin{equation}\label{good estimate h uniform}
\begin{aligned}
|h^{(1)}_T(u_1)-h^{(2)}_T(u_2)| 
& 
\leq |\text{Re}(u_1)-\text{Re}(u_2)|\frac{I_{T,y}}{y}
\\
& +\epsilon \exp \left[\frac{1}{2}\left(\log A_1 \log A_2\right)^{1/2} +\log\log \frac{I_{T, y}}{y}\right],
\end{aligned}
\end{equation}
where 
\[
A_k=\frac{I_{T, y}|(h^{(k)}_{T})^{'}(u_)|}{y}, \quad k=1,2.
\]

\subsection{Proof of Theorem \ref{first main result}}

In this section we apply the results of the previous part in the case when the driving function is of the form $\sqrt{\kappa}B_t$. We are interested in how things change when $\kappa$ is changing. 

We start by discussing Assumptions \ref{assumption: osc} and \ref{assumption: derivative control}. It is easy to see that the first one is satisfied for sufficiently small $\delta$ since $osc(B_t,\delta)/\sqrt{2\delta\log(1/\delta)}\to1$. For all $\delta$ we use the following result.
\begin{proposition}[Theorem 3.2.4 in \cite{lawler2010random}]
\label{prop: Brownian oscilation}
Let $B_t$ be the standard Brownian motion on $[0,1]$. There is an absolute constant $c<\infty$ such that for all $0<\delta\le 1$ and $r>c$
\[
\P\brb{osc(B_t,\delta)\ge r\sqrt{\delta\log(1/\delta)}}\le c \delta^{(r/c)^2}.
\]
\end{proposition}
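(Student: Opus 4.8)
\emph{Proof proposal.} The plan is to control the supremum over the continuum of pairs $(s,t)$ with $|t-s|\le\delta$ by a maximum over $O(1/\delta)$ suprema of Brownian motion on intervals of length $\delta$, and then to combine the Gaussian tail of a single such supremum with a union bound. First I would set $N=\lceil 1/\delta\rceil$ and $t_j=j\delta$ for $0\le j\le N$, and note that if $s<t$ with $t-s\le\delta$, then, choosing $j$ with $t_j\le s<t_{j+1}$, one also has $t<t_{j+2}$, so both $s$ and $t$ lie in the block $[t_j,t_{j+2}]$ of two consecutive subintervals. Writing $M_j:=\sup_{u\in[t_j,t_{j+1}]}|B_u-B_{t_j}|$, the triangle inequality across the (at most) two subintervals involved gives $|B_t-B_s|\le 2\max_{0\le j\le N-1}M_j$, hence $osc(B_t,\delta)\le 2\max_{0\le j\le N-1}M_j$.

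Next I would use stationarity of Brownian increments together with Brownian scaling: each $M_j$ has the same law as $\sqrt{\delta}\,\sup_{u\in[0,1]}|W_u|$ for a standard Brownian motion $W$. By the reflection principle, $\P[\sup_{u\in[0,1]}|W_u|\ge x]\le 4\,\P[W_1\ge x]\le 4 e^{-x^2/2}$, so $\P[M_j\ge a]\le 4 e^{-a^2/(2\delta)}$ for every $a>0$. A union bound over the at most $N\le 2/\delta$ indices then yields
\[
\P\brb{osc(B_t,\delta)\ge 2a}\ \le\ \frac{8}{\delta}\,e^{-a^2/(2\delta)}.
\]

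It then remains to substitute $a=\tfrac12 r\sqrt{\delta\log(1/\delta)}$, so that $a^2/(2\delta)=\tfrac{r^2}{8}\log(1/\delta)$ and the right-hand side becomes $8\,\delta^{\,r^2/8-1}$. Since $0<\delta\le 1$, in order to conclude $8\,\delta^{\,r^2/8-1}\le c\,\delta^{(r/c)^2}$ it suffices that $c\ge 8$ and $r^2/8-1\ge r^2/c^2$; the latter holds for all $r>c$ once $c$ is fixed large enough (for instance $c=8$ already gives $r^2(\tfrac18-\tfrac1{64})\ge 1$ when $r>8$). Choosing such a $c$ completes the argument.

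I do not expect a genuine obstacle here: this is a standard union-bound estimate. The only step that needs a little care is matching the constants so that the $\delta^{-1}$ prefactor produced by the union bound over the $\sim 1/\delta$ blocks is absorbed into the stretched-exponential decay — this is precisely what forces the hypothesis $r>c$. (If one instead wanted the sharp constant $\sqrt 2$ in front of $\sqrt{\delta\log(1/\delta)}$, one would replace the crude two-block decomposition by a dyadic chaining argument as in the proof of L\'evy's modulus of continuity, but that refinement is unnecessary for the inequality as stated.)
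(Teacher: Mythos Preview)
The paper does not supply its own proof of this proposition; it is quoted as Theorem~3.2.4 of \cite{lawler2010random} and used as a black box. Your argument is the standard one (and is essentially the textbook proof): discretise $[0,1]$ into $O(1/\delta)$ blocks, bound the oscillation by a small multiple of the maximal one-block fluctuation, apply the reflection-principle Gaussian tail on each block, and take a union bound. The hypothesis $r>c$ is precisely what lets the exponential decay absorb the $1/\delta$ prefactor from the union bound, as you correctly identify.

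One small correction of constants: with your definition $M_j=\sup_{u\in[t_j,t_{j+1}]}|B_u-B_{t_j}|$, the triangle inequality across two consecutive blocks gives $|B_t-B_s|\le |B_t-B_{t_{j+1}}|+|B_{t_{j+1}}-B_{t_j}|+|B_{t_j}-B_s|\le M_{j+1}+2M_j\le 3\max_j M_j$, not $2\max_j M_j$, since $|B_{t_{j+1}}-B_s|$ is only controlled by $2M_j$ when $s\in[t_j,t_{j+1}]$. This only shifts the absolute constant $c$ in the final statement, so the proof goes through unchanged after adjusting it (or after redefining $M_j$ as the full oscillation on $[t_j,t_{j+1}]$, which restores the factor $2$ at the cost of a worse single-block tail exponent).
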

This means that if we take $r$ large enough, then we have a uniform bound on $osc$ with very high probability. Alternatively, for almost every $B_t$ there is (random) $r$ such that $osc\le r\sqrt{\delta\log(1/\delta)}$.
Throughout our analysis the driver is $\sqrt{\kappa}B_t$, with $\kappa \in (0, 8/3)$.  Thus, we can merge the constant $\kappa$ in the modulus of continuity of the driver $\sqrt{\kappa}B_t$ and estimate it directly with the biggest value. We will do the probabilistic version of this estimate in the next section.

Assumption \ref{assumption: derivative control} was established for SLE in \cite{friz2019regularity}.
\begin{proposition}[Corollary $4.2$ in \cite{friz2019regularity}] \label{Cor yizheng}
Let $\kappa_- > 0,$ and $\kappa_+ < 8/3$. Then there exist $\beta < 1$ and a random variable $C(\omega) < \infty$  such that almost surely
$$\sup_{(t, \kappa) \in [0,1] \times [\kappa_-, \kappa_+]}|\hat{f}'_t(iy)|
\leq C(\omega)y^{-\beta}$$
for all $ y \in [0,1]$. 
\end{proposition}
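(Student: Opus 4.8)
The plan is to upgrade the fixed-$\kappa$ derivative bounds for $SLE_\kappa$ (such as those of \cite{viklund2011optimal}, where one obtains an exponent $\beta(\kappa)<1$ for every $\kappa<8$) to a statement that is uniform over the compact window $[\kappa_-,\kappa_+]$ and uses a single random constant. The point we exploit is that all the $\kappa$ are coupled through one Brownian motion $B$: this makes $(t,\kappa)\mapsto(\hat f^{\kappa}_t)'(iy)$ a genuine two–parameter field on a single probability space, and the proof then splits into (i) a one–point moment estimate $\mathbb{E}\big[|(\hat f^{\kappa}_t)'(iy)|^{\zeta}\big]\le C\,y^{-\zeta\beta_0}$ uniform in $t\in[0,1]$ and $\kappa\in[\kappa_-,\kappa_+]$ with $\beta_0<1$, and (ii) moment bounds for the increments of this field in $t$ and in $\kappa$. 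Given (i) and (ii), Kolmogorov's continuity theorem (equivalently the Garsia--Rodemich--Rumsey inequality) on the square $[0,1]\times[\kappa_-,\kappa_+]$ converts the pointwise bound into a bound of the form $\mathbb{E}\big[\sup_{t,\kappa}|(\hat f^{\kappa}_t)'(iy)|^{\zeta'}\big]\le C\,y^{-\zeta'\beta_1}$ with $\beta_1<1$; then a Borel--Cantelli argument along the dyadic scales $y=2^{-m}$, together with the Koebe distortion theorem to interpolate deterministically between scales, yields the claimed a.s. bound $|(\hat f^{\kappa}_t)'(iy)|\le C(\omega)y^{-\beta}$ for all $y\in(0,1]$.

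For (i) I would use reversibility of Brownian motion: for fixed $t$ and $\kappa$, $\hat f^{\kappa}_t$ has the same law as the centred backward Loewner flow $\hat h^{\kappa}_t$ driven by $\sqrt{\kappa}\tilde B$. Writing $Z_s=\hat h^{\kappa}_s(iy)-(\text{driver})=X_s+iY_s$ one has $dY_s=2Y_s|Z_s|^{-2}ds$ and $\log|(\hat h^{\kappa}_s)'(iy)|=\int_0^s 2(X_r^2-Y_r^2)|Z_r|^{-4}\,dr$. Reparametrising time by $v=\log(Y_s/y)$, so that $v$ ranges over $[0,\log(1/y)+O(1)]$, reduces the slope $O_s=X_s/Y_s$ to the time–homogeneous diffusion $dO_v=-2O_v\,dv+\sqrt{\kappa/2}\sqrt{1+O_v^2}\,d\hat B_v$, whose invariant density is proportional to $(1+O^2)^{-1-4/\kappa}$, and it gives the identity $\log|(\hat h^{\kappa}_t)'(iy)|=v-2\int_0^{v}(1+O^2_{v'})^{-1}\,dv'$. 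The required moment bound is then a Feynman--Kac/Girsanov estimate for this one–dimensional diffusion, controlling how little time $O_v$ can spend near infinity so that the $-2\int(1+O^2)^{-1}$ term genuinely improves the trivial exponent $1$. The fixed–$\kappa$ version of this estimate is the technical core of \cite{viklund2011optimal} (and of Rohde--Schramm's proof of trace existence); the observation for us is that the invariant density, the associated principal eigenvalue of the tilted generator, and all the constants depend continuously and monotonically on $\kappa$, so a single admissible pair $(\zeta,\beta_0)$ with $\beta_0<1$ can be fixed on $[\kappa_-,\kappa_+]$, and the restriction $\kappa_+<8/3$ guarantees that after paying the cost of the two–parameter union bound of (i)--(ii) the resulting exponent still lands below the critical value $1$.

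For (ii) I would exploit that the drivers are uniformly close: $\sup_{s\le 1}|\sqrt{\kappa_1}B_s-\sqrt{\kappa_2}B_s|=|\sqrt{\kappa_1}-\sqrt{\kappa_2}|\,\sup_{s\le1}|B_s|$, and $\sup_{s\le 1}|B_s|$ has Gaussian–type tails. Differentiating the (backward) Loewner ODE in the spatial variable and applying Gronwall's lemma — a derivative analogue of Lemma~\ref{l: h uniform close} — bounds $|(\hat f^{\kappa_1}_t)'(iy)-(\hat f^{\kappa_2}_t)'(iy)|$ by the driver distance times powers of $|(\hat f^{\kappa_i}_t)'(iy)|$ and of $1/y$; combining with (i) yields $\mathbb{E}\big[|(\hat f^{\kappa_1}_t)'(iy)-(\hat f^{\kappa_2}_t)'(iy)|^{\zeta''}\big]\le C|\kappa_1-\kappa_2|^{\zeta''}y^{-\rho}$, and the same scheme for two nearby times (using that $\sqrt\kappa B$ is weakly H\"older–$1/2$) gives $\mathbb{E}\big[|(\hat f^{\kappa}_{t_1})'(iy)-(\hat f^{\kappa}_{t_2})'(iy)|^{\zeta''}\big]\le C|t_1-t_2|^{\zeta''/2}y^{-\rho}$, up to logarithmic corrections. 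These are exactly the inputs the Kolmogorov/GRR step of the first paragraph requires.

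The step I expect to be the genuine obstacle is (i): the one–dimensional diffusion estimate for $O_v$ forcing $\mathbb{E}\big[|(\hat h^{\kappa}_t)'(iy)|^{\zeta}\big]\le C\,y^{-\zeta\beta_0}$ with $\beta_0<1$ and with enough slack left over for the $(t,\kappa,y)$ union bound. This is where the structure of $SLE$, and the cutoff in $\kappa$, really enters; by contrast, once a good $(\zeta,\beta_0)$ is available uniformly over the compact $\kappa$–window, the remaining ingredients (driver closeness, Gronwall, Kolmogorov/GRR, Borel--Cantelli, Koebe) are routine and their uniformity in $\kappa$ is automatic from the continuous dependence of the constants.
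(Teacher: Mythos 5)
The paper offers no proof of this proposition: it is imported verbatim as Corollary~4.2 of \cite{friz2019regularity}, so there is no in-paper argument to compare against. Your sketch is, in outline, a faithful reconstruction of the strategy used in that reference and its predecessors: a one-point moment bound for $|(\hat f^{\kappa}_t)'(iy)|$ uniform over the compact $\kappa$-window, obtained from the reverse-flow slope diffusion (your identity $\log|(\hat h^{\kappa}_t)'(iy)|=v-2\int_0^v(1+O_{v'}^2)^{-1}\,dv'$ and the invariant density proportional to $(1+O^2)^{-1-4/\kappa}$ are the standard ingredients, and the reversal trick is legitimate because the moment estimate is needed only at fixed $t$); increment moment bounds in $t$ and $\kappa$ via driver closeness and Gronwall; a two-parameter Kolmogorov/GRR step; and Borel--Cantelli plus Koebe distortion across dyadic scales in $y$.

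The genuine gap is the one you yourself flag: the entire quantitative content of the statement --- why the threshold is $8/3$ rather than the $8(2-\sqrt3)$ of \cite{viklund2014continuity} --- lives in the exponent arithmetic of your steps (i)--(ii), which you leave as a black box. One must exhibit a pair $(\zeta,\beta_0)$ with $\beta_0<1$, valid uniformly on $[\kappa_-,\kappa_+]$, such that after the losses incurred by the increment estimates and the two-parameter GRR/union bound the surviving exponent $\beta_1$ is still strictly below $1$; checking that this is possible precisely up to $\kappa<8/3$ is the refined GRR estimate that constitutes the main technical work of \cite{friz2019regularity}. Until that computation is carried out, the argument establishes only that the scheme \emph{could} yield the proposition, not that it does; in particular nothing in your sketch rules out that the naive bookkeeping reproduces only the weaker $8(2-\sqrt3)$ window. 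As a description of the route the proposal is correct; as a proof of the stated proposition it is incomplete at exactly the step where the hypothesis $\kappa_+<8/3$ must be used.
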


Let us consider two parameters $\kappa_1,\kappa_2 \in (0, 8/3)$ and two Loewner evolutions driven by $\sqrt{\kappa_1}B_t$ and $\sqrt{\kappa_2}B_t$. The corresponding maps and curves will be denoted by superscripts $(1)$ and $(2)$ correspondingly.

Throughout this section, the precise subpower function that we use is changing from line to line. Unless it might lead to a confusion, we do not track these changes in order to simplify notations.

Our goal is to estimate the supremum of $|\gamma^{(1)}(t)-\gamma^{(2)}(t)|$. 
By the triangle inequality
\begin{equation}\label{difftrac}
|\gamma^{(1)}(t)-\gamma^{(2)}(t)| \leq |\gamma^{(1)}(t)-\gamma^{n,(2)}(t)|+|\gamma^{n,(2)}(t)-\gamma^{(2)}(t)|
\end{equation}
where $\gamma^{n,(j)}$  is the trace obtained form interpolating with square root terms the driver $\sqrt{\kappa_j}B_t$.

In order to control the first term, we first fix an arbitrary interval $I = [t_k, t_{k+2}]$, with $ 0 \leq  k \leq n-2.$ We will estimate $|\gamma^{(1)}(s+t_k)-\gamma^{n, (2)}(r+t_k)|$ for all $r \in \left[\frac{1}{n}, \frac{2}{n} \right],$ and for  the specific point $s$ obtained in the Lemma \ref{SLEpointlemma}. 
Combining with the uniform continuity of $\gamma$ from Lemma \ref{unifcont}, we will have an estimate
for $|\gamma^{(1)}(r+t_k)-\gamma^{n, (2)}(r+t_k)|$ for all $r \in \left[\frac{1}{n}, \frac{2}{n} \right]$.  Redoing the same analysis on each interval in the time discretization, we obtain the desired estimate. The second term in the inequality \eqref{difftrac} is estimated by Theorem \ref{Ttran}.

Then, to be more precise, let $z=\gamma_k^{(1)}(s), w=\gamma_k^{n, (2)}(r)$, with $s$ and $r$, as before.

\begin{equation}
\label{inequalityTran}
\begin{aligned}
|\gamma^{(1)}(s+t_k)&-\gamma^{n, (2)}(r+t_k)|
\\ &\leq |\hat{f}^{(1)}_{t_{k}}(z)-\hat{f}^{(1)}_{t_{k}}(w)|+ |\hat{f}^{(1)}_{t_{k}}(w)-\hat{f}^{n, (2)}_{t_{k}}(w)  |.
\end{aligned}
\end{equation}

As in \cite{tran2015convergence}, we estimate the first term in \eqref{inequalityTran} using
$$|{\hat{f}^{(1)}}_{t_{k}}(z)-{\hat{f}^{(1)}}_{t_{k}}(w)| \leq (2\text{Im}z)|(\hat{f}^{(1)}_{t_{k}})^{'}(z)|\exp (4 d_{\mathbb{H}, hyp}(z,w)),$$

where $d_{\mathbb{H}, hyp}(z,w)=\operatorname{Arccosh}\left(1+\frac{|z-w|^2}{2\operatorname{Im}z \operatorname{Im}w} \right)$ is the hyperbolic distance in $\mathbb{H}.$ To estimate this we use the Proposition \ref{Cor yizheng}.

To estimate the second term in \eqref{inequalityTran} we use Lemma \ref{l: h uniform close}. For this we estimate the distance between the two driving terms: $\sqrt{\kappa_1}B_t$ and the square root interpolation of the $\sqrt{\kappa_2}B_t$

$$|\lambda^n_{\kappa_2}(t)-\sqrt{\kappa_1}B_t| \leq |\lambda^n_{\kappa_2}(t)-\sqrt{\kappa_2}B_t|+|\sqrt{\kappa_2}B_t-\sqrt{\kappa_1}B_t|\,. $$

Thus, we obtain combining the estimates with the ones in Subsection $3.2$ of \cite{tran2015convergence}, that

\begin{align*}
\epsilon := \sup_{t \in [0,1]}|\lambda^n_{\kappa_2}(t)-\sqrt{\kappa_1}B_t| &\leq \frac{\phi(n)}{\sqrt{n}}+|\sqrt{\kappa_1}-\sqrt{\kappa_2}|\sup_{t \in [0,1]}|B_t|\nonumber\\ &\leq\frac{\phi(n)}{\sqrt{n}}+c|\sqrt{\kappa_1}-\sqrt{\kappa_2}|\,.
\end{align*}

 Let $u_1 = x_1 + iy := w + \sqrt{\kappa_1}B_{t_k}$ and let $u_2=x_2+iy:=w+\lambda^n_{\kappa_2}(t_k)$. 
Then $\lambda^n_{\kappa_2}(t_k)$ is constructed such that $\lambda^n_{\kappa_2}(t_k)=\sqrt{\kappa_2}B_{t_k}$. Thus, we have that $|\text{Re}(u_1)-\text{Re}(u_2)| \leq |\sqrt{\kappa_1}-\sqrt{\kappa_2}|B_{t_k}$. By \eqref{good estimate h uniform}, we have that
\begin{equation}
\begin{aligned}
 |f^{(1)}_{t_k}(u_1)&-f^{n, (2)}_{t_k}(u_2)|\leq  |\sqrt{\kappa_1}-\sqrt{\kappa_2}|B_{t_k}\frac{I_{t_k, y}}{y}\\
 &+ \epsilon \exp \left[\frac{1}{2}\left( \log A_1\log A_2\right)^{1/2} +\log\log \frac{I_{t_k, y}}{y}\right],
 \end{aligned}
\end{equation}
where
\[
A_j=\frac{I_{t_k, y}|(f^{(j)}_{t_k})^{'}(u_j)|}{y}, \quad j=1,2
\]
with $$\epsilon \leq  \frac{2\phi(n)}{\sqrt{n}}+c|\sqrt{\kappa_1}-\sqrt{\kappa_2}|\,.$$
These estimates are used for points inside the boxes $A_{n,c, \phi}$. Thus, for $y=\text{Im}u_1=\text{Im}u_2=\text{Im}w \in [\frac{1}{\sqrt{n}{\phi(n)}}, \frac{2\sqrt{2}}{\sqrt{n}}]$
we have that
$$ \frac{I_{t_k, y}}{y} \leq 2\sqrt{2}\sqrt{n}\phi(n), $$
where $\phi(n)$ is some sub-power function of $n$\,.
Using that $\hat{f}'_{t_k}(w)=(f^{(1)}_{t_k})^{'}(u_1)$, we obtain from Proposition \ref{Cor yizheng} the estimate 
$$|(f^{(1)}_{t_{k}})^{'}(u_1)| \leq cy^{-\beta(\kappa_1)}\leq c\phi(n)^{\beta(\kappa_1)}\sqrt{n}^{\beta(\kappa_1)}\,.$$ 
and the general estimate
$$|(f^{n, (2)}_{t_k})^{'}(u_2)| \leq C(1/y+1)\leq 2C\phi(n)\sqrt{n}.$$ 
Note that the second estimate holds true for any conformal map of $\mathbb{H}$.
Combining these estimates, we obtain that
\[
\begin{aligned}
|&f^{(1)}_{t_k}(u_1)-f^{n, (2)}_{t_k}(u_2)|
\\ 
&\leq |\sqrt{\kappa_1}-\sqrt{\kappa_2}|B_{t_k}2\sqrt{2}\sqrt{n}\phi(n)
\\
&+ \frac{\phi(n)}{\sqrt{n}}\exp\left[\sqrt{\frac{1+\beta(\kappa_1)}{2}}\log(c\phi(n)\sqrt{n})+\log\log2\sqrt{2n}\phi(n) \right]
\\ 
&+ c|\sqrt{\kappa_1}-\sqrt{\kappa_2}|\exp\left[\sqrt{\frac{1+\beta(\kappa_1)}{2}}\log(c\phi(n)\sqrt{n})+\log\log2\sqrt{2n}\phi(n) \right] 
\\
 &\leq \frac{\phi(n)}{\sqrt{n}^{1-\sqrt{\frac{1+\beta(\kappa_1)}{2}}}}+ \Psi(|\sqrt{\kappa_1}-\sqrt{\kappa_2}|, n)+ \Phi( |\sqrt{\kappa_1}-\sqrt{\kappa_2}|, \kappa_1, n),
\end{aligned}
\]
where 
\begin{align*}
\Psi(|\sqrt{\kappa_1}-\sqrt{\kappa_2}|, n)=\Psi(n):= |\sqrt{\kappa_1}-\sqrt{\kappa_2}|\hat{c}2\sqrt{2}\sqrt{n}\phi(n)
\end{align*}
with $\hat{c} < \infty$ a.s. and
\begin{align*}
\Phi( &|\sqrt{\kappa_1}-\sqrt{\kappa_2}|, \kappa_1, n)=\Phi(n) \nonumber\\
 &:= c|\sqrt{\kappa_1}-\sqrt{\kappa_2}|\exp\left[\sqrt{\frac{1+\beta(\kappa_1)}{2}}\log(c\phi(n)\sqrt{n})+\log\log2\sqrt{2n}\phi(n) \right].
\end{align*}

Thus, using that $\hat{f}_{t_k}^{(1)}(w)-\hat{f}_{t_k}^{n, (2)}(w)=f_{t_k}^{(1)}(w+\sqrt{\kappa_1}B_t)-f_{t_k}^{n, (2)}(w+\lambda^n_{\kappa_2})$ and \eqref{inequalityTran} we obtain that 
\[
\begin{aligned}
|\gamma^{(1)}(s+t_k)&-\gamma^{n, (2)}(r+t_k)|
\\
&\leq \frac{\phi_1(n)}{\sqrt{n}^{1-\beta(\kappa_1)}} +\frac{\phi(n)}{\sqrt{n}^{1-\sqrt{\frac{1+\beta(\kappa_1)}{2}}}}+ \Psi(n)+ \Phi(n),
\end{aligned}
\]
for all $ r \in [\frac{1}{n}, \frac{2}{n}]$.
Using that $\sqrt{\frac{1+\beta}{2}} > \beta$, we obtain that 
\begin{align}\label{final}
|\gamma^{(1)}(s+t_k)&-\gamma^{n, (2)}(r+t_k)| \nonumber\\ &\leq \frac{\phi_2(n)}{\sqrt{n}^{1-\sqrt{\frac{1+\beta(\kappa_1)}{2}}}}+ \Psi(n)+ \Phi(n),
\end{align}
for all $r \in [t_{k+1}, t_{k+2}]$ and $0 \leq k\leq n-2$ and hence for all $r \in [0,1]$.

In order to estimate the second term in \eqref{difftrac}, i.e. $|\gamma^{(2)}(t)-\gamma^{n, (2)}(t)|$,  we use directly the result from Theorem \ref{Ttran}, i.e. we have that 
\[
|\gamma^{n,(2)}(t)-\gamma^{(2)}(t)| \leq \frac{\tilde{\phi}^{(2)}(n)}{n^{\frac{1}{2}\left(1-\sqrt{\frac{1+\beta(\kappa_2)}{2}}\right)}},
\]
where $\tilde{\phi}^{(2)}(n)$ is a subpower function that depends on the approximation of the driver $\sqrt{\kappa_2}B_t$. 

Thus, overall we have the following estimate that we control using the probabilistic estimates in the next section 
\begin{align*}
|\gamma^{(1)}(t)-\gamma^{(2)}(t)|\nonumber \leq \frac{\phi_2(n)}{\sqrt{n}^{1-\sqrt{\frac{1+\beta(\kappa_1)}{2}}}}+\Psi(n)+ \Phi(n)+ \frac{\tilde{\phi}^{(2)}(n)}{n^{\frac{1}{2}\left(1-\sqrt{\frac{1+\beta(\kappa_2)}{2}}\right)}}.
\end{align*}

\subsection{Probabilistic estimates}
We first consider the estimate in Proposition \ref{Cor yizheng} in order to obtain control on the derivative of $\hat{f}_t(z)$.
It follows from Proposition \ref{prop: Brownian oscilation} that there exists constants $c_1$ (depending on $\kappa$) and $c_2$ such that
\begin{align}
\mathbb{P}\left[ osc(\sqrt{\kappa}B_t, \frac{1}{m}) \geq c_1\sqrt{\frac{\log m}{m}} \right] \leq \frac{c_2}{n^2}\,.
\end{align}

Notice that in Theorem \ref{Ttran} the subpower function is $\phi(n)=\sqrt{\log(n)}$. Then as in \cite{tran2015convergence}, by going through the
proof, one sees that the subpower functions are changed by adding, multiplying and
exponentiating constants. Hence the if we merge the dependence on $\kappa$ in the initial subpower function, i.e. we start with $\sqrt{\kappa \log n}$, then we end up with 
$c(\sqrt{\kappa \log n})^{c'}$ for some constants $c$ and $c'\,.$
Using \eqref{final} we obtain that
\[
\begin{aligned}
&\mathbb P \left[\|\gamma^{(1)}-\gamma^{n, (2)}\|_{[0,1], \infty} \leq \frac{c_6(\kappa\log m)^{c_7}}{\sqrt{m}^{1-\sqrt{\frac{1+\beta(\kappa_1)}{2}}}} + \Psi (m)+ \Phi(m)  \hspace{2mm} \text{for all} \hspace{2mm} m\geq n \right] \nonumber\\&\geq 1-\left(\frac{c_2(\kappa_1)}{n^2} +\frac{c_3(\kappa_1)}{n^{c_4(\kappa_1)/2}}\right).
\end{aligned}
\]
Next, by \cite{tran2015convergence}, there are $c_4$ and $c_5$ depend on $\kappa_2$ such that 
\begin{align*}
\mathbb{P}\left[\|\gamma^{(2)}(t)-\gamma^{m, (2)}(t)\|_{[0,1], \infty} \leq \frac{c_1(\kappa_2\log(m))^{c_2}}{\sqrt{m}^{1-\sqrt{\frac{1+\beta(\kappa_2)}{2}}}} \hspace{1mm} \text{for all} \hspace{2mm} m\geq n   \right] \geq 1-\frac{c_4}{n^{c_5}},
\end{align*}
The previous analysis performed for the two values $\kappa_1$ and $\kappa_2$ can be extended for sequences $\kappa_j \to \kappa$. 

First, we apply the previous Lemmas \ref{SLEpointlemma} and \ref{LTran2}  for sequences $\kappa_j \to \kappa$. Next, we use the almost sure estimate from Proposition \ref{Cor yizheng} for the sequence $\kappa_j \to \kappa$ by making use that the constant $C(\omega)$ in $$\sup_{(t, \kappa) \in [0,1] \times [\kappa_-, \kappa_+]}|\hat{f}'_t(iy)|
\leq C(\omega)y^{-\beta}$$ does not depend on the sequence $\kappa_j \to \kappa$.

Continuing the analysis, the sizes of the boxes $A_{n, c, \phi}$ depend on $\kappa_j$ via the dependence of the subpower function that we choose, on $\beta=\beta(\kappa_j)$ and on $\phi$ (that depends also on $\kappa_j$, since the driver is $\sqrt{\kappa_j}B_t$).  However, since the constant $c=2\sqrt{2}$ is fixed, the upper level of the boxes remains the same as we consider $\kappa_j \to \kappa$, only their width and lower level changes. 

We consider $\kappa_j \to \kappa$ by choosing for each $j$ the largest box that contains both points $z$ and $w$ in order to estimate the hyperbolic distance between them, i.e. we make use of the fact that the upper height of the boxes coincides and we work on $A_{n, 2\sqrt{2}, \xi(\kappa_j)}$ with $\xi(\kappa_j)=\max (\psi(\kappa_j), \tilde{\psi}(\kappa))$. This is a dynamical version (as we vary the index $j$) of the analysis in \cite{tran2015convergence} that is performed for fixed $\kappa$.  For each fixed $j$, the estimates work in the same manner.

In order to assure that $\Psi( |\sqrt{\kappa}-\sqrt{\kappa_j}|, n)$ and $\Phi( |\sqrt{\kappa}-\sqrt{\kappa_j}|, \kappa, n)$ converge to zero as $j \to \infty$,  we choose $n=n(\kappa_j)$ such that as $j \to \infty$
$$\hat{c} |\sqrt{\kappa}-\sqrt{\kappa_j}|2\sqrt{2}\sqrt{n}\phi(n) \to 0$$ and 
$$  c|\sqrt{\kappa}-\sqrt{\kappa_j}|\exp\left[\sqrt{\frac{1+\beta(\kappa)}{2}}\log(c\phi(n)\sqrt{n})+\log\log2\sqrt{2n}\phi(n) \right] \to 0. $$

Combining the previous estimates and using a union bound, we obtain the result. 

For the second part of the result, the continuity in $\kappa$ for $\kappa \in (0, 8/3)$ of the curves generated by the algorithm is obtained by estimating

\[
\begin{aligned}
|\gamma^{n, (1)}(t)-\gamma^{n, (2)}(t)|&\leq|\gamma^{n, (2)}(t)-\gamma^{(2)}(t)| 
\\
&+|\gamma^{(2)}(t)-\gamma^{(1)}(t)| +| \gamma^{(1)}(t)-\gamma^{n, (1)}(t)|.
\end{aligned}
\]

The first and the last term can be directly estimated using Theorem  \ref{Ttran}, since these are terms that compare the $SLE_{\kappa_1}$ and $SLE_{\kappa_2}$  traces with the corresponding approximated traces. The middle term is estimated using the analysis performed in the proof so far, and the conclusion follows.

\begin{remark}
The algorithm uses estimates on the derivative of the conformal maps. We remark that the derivative of the composition of the conformal maps obtained when solving Loewner equation on each element of the partition of the time interval $[t_{k}, t_{k+1}]$ (where $t_k=\frac{k}{n}$, $0\leq k\leq n$) with $c\sqrt{t}+d$ with $c$, $d \in \mathbb R $, is not easy to estimate directly. That is why we used in our proof the estimate on the derivative of the Loewner map $$\sup_{(t, \kappa) \in [0,1] \times [\kappa_-, \kappa_+]}|\hat{f}'_t(iy)|
\leq C(\omega)y^{-\beta}$$ from Proposition \ref{Cor yizheng} with $\beta(\kappa) <1$, $\forall \kappa \neq 8.$ 
\end{remark}

\section{Proof of the second main result}

\subsection{Rough Path Theory overview}
First, in this subsection we give an overview of Rough Path Theory following \cite{lyons1998differential}, that we refer the reader to for more details.

For $T>0$ a real number and $V$ a finite dimensional vector space, we let $X_{[s,t]}$ denote the restriction of the continuous function $X:[0,T] \to V$ to the compact interval $[s,t]$. Next, we introduce the notion of $p$-variation. 
\begin{definition}
Let $V$ be a finite dimensional real vector space with dimension $d$ and basis vectors $e_1,\ldots, e_d\,.$. The $p$-variation of a path $X: [0,T] \to V $ is defined by 
\begin{align*}
||X_{[0,T]}||_{p-var} \; := \; \sup_{\mathcal{D}=(t_0, t_1,\ldots,t_n) \subset [0,T]}\left( \sum\limits_{i=0}^{n-1}d(X_{t_i}, X_{t_{i+1}})^p \right)^{\frac{1}{p}}\,,
\end{align*}
where the supremum is taken over all finite partitions of the interval $[0,T]\,.$
\end{definition}
 Throughout the next sections we use the notation $X_{s,t}= X_t-X_s\,.$
Let us further define $\Delta_T = \{ (s,t) \in [0, T] \times [0,T] |0 \leq s\leq t \leq T\}.$
We introduce next the fundamental notion of control.
\begin{definition}
 A control on $[0, T]$ is a non-negative continuous function $$ \omega : \Delta_T \to [0, \infty)$$ for which
\begin{align*}
\omega(s,t)+\omega(t,u) \leq \omega(s,u),
\end{align*}
for all $0 \leq s \leq t \leq u \leq T, $ and $\omega (t,t) = 0,$  for all $t \in [0,T]\,.$
\end{definition}
Furthermore, we introduce the following.
\begin{definition}
Let $T((V)):= \left\{\boldsymbol{a} = (a_{0}, \,a_{1},\, \cdots\,) : a_{n}\in V^{\otimes n}\,\,\,\forall n\geq 0\right\}$ denote the set of formal series of tensors of $V\,.$
\end{definition}
\begin{definition}
The tensor algebra $T(V);= \bigoplus_{k\geq 0}V^{\otimes_k}$ is the infinite sum of all tensor products of $V\,.$
\end{definition}

Let $e_1, e_2, \ldots, e_d $ be a basis for $V\,.$ The space $V^{\otimes k}$ is a $d^k$ dimensional vector space with basis elements of the form $(e_{i_1} \otimes e_{i_2} \ldots \otimes e_{i_k})_{(i_1,\ldots,i_k) \in \{ 1,\ldots, d \}^k} \,.$ We store the indices $(i_1, \ldots , i_k) \in \{1,2, \ldots d \} ^{k}$ in a multi-index $I$ and let $ e_{I}=e_{i_1} \otimes e_{i_2} \otimes \ldots e_{i_k}\,.$
The metric $||\cdot||$ on $T((V))$ is the projective norm defined for $$x= \sum_{|I|=k}\lambda_I e_I \in V^{\otimes k}$$ via $$||x||=\sum_{|I|=k}|\lambda_I|.$$  Thus, the bound $||X^i_{s,t}|| \leq \frac{{w(s,t)}^{i/p}}{\beta(\frac{i}{p})!}, \hspace{2mm} \forall i \geq 1,\hspace{2mm} \forall (s,t) \in \Delta_T,$ gives control on the sum of $i$-iterated integrals.
We collect all the iterated integrals in the following way.
We consider for $$X: \Delta_T \to T((\mathbb{R}))$$ the collection of iterated integrals as $$(s,t) \to \mathbf{X}_{s,t}= (1, X_{s,t}^1, \ldots, X_{s,t}^{[p]}, \ldots, X_{s,t}^m, \ldots ) \in T((V)).$$  We call the collections of iterated integrals the signature of the path $X$. 

We now define the notion of $\textit{multiplicative functional}$.
\begin{definition}
Let $n \geq 1$ be an integer and let $X : \Delta_T \to T^{(n)}(V)$ be a continuous map. Denote by $X_{s,t}$ the image of the interval $(s,t)$ by $X\,,$ and write
$$X_{s,t}=(X_{s,t}^{0}, \ldots X_{s,t}^{n}) \in \mathbb{R}\oplus V\oplus V^{\otimes 2} \ldots \oplus V^{\otimes n}\,.$$
The function $X$ is called multiplicative functional of degree $n$ in $V$ if $X_{s,t}^0=1$ and for all $(s,t) \in \Delta_t$ we have
$$X_{s,u} \otimes X_{u,t}= X_{s,t} \hspace{2mm} \forall s,u,t \in [0,T]\,.$$
\end{definition}
Throughout our analysis, we will use the notion of $p$-rough path that we define in the following. 
\begin{definition}\label{def1}
A $p$-rough path of degree $n$ is a map $X : \Delta_T \to \tilde{T}^{(n)}(V)$ which satisfies Chen's identity $X_{s,t}\otimes X_{t,u}=X_{s,u}$ and the following 'level dependent' analytic bound
$$||X_{s,t}^{i}|| \leq \frac{w(s,t)^{\frac{i}{p}}}{\beta_p (\frac{i}{p})!} \,,$$
\end{definition}
\noindent
where $y!=\Gamma(y+1)$ whenever $y$ is a positive real number and $\beta_p\,,$ is a positive constant. 

Furthermore, we introduce a metric on $\Lambda_p(V)$ which transform the space $\Lambda_p(V)$ in a complete metric space. For $X, Y \in \Lambda_p(V)$ we define 
$$d_p(X,Y)=\max_{1\leq i \leq [p]}\sup_{\cal{D} \subset [0,T]} \left( \sum\limits_{\cal{D}}||X_{t_i, t_{i+1}}^i-Y_{t_i, t_{i+1}}^i ||^{\frac{p}{i}}\right)^\frac{i}{p}. $$

Related to this notion is a notion of convergence that is the \textit{convergence in the p-variation topology}\,. Formally, this is defined in terms of converging sequences. 
\begin{definition}\label{pvarconv}
A sequence $(X(n))_{n \geq 1} \in \Lambda_p(V)$ is said to converge to $X \in \Lambda_p(V)$ in $p$-variation topology if there exists a $p$-control $w$ of $X$ and $X(n)$ for all $n \geq 1\,,$ and a sequence $(a(n))_{n \geq 1}$ of positive reals such that $\lim_{n \to \infty}a(n)=0$ and 
$$||X(n)_{s,t}^{i}-X_{s,t}^{i}|| \leq a(n)w(s,t)^{\frac{i}{p}}\,, $$ for all $(s,t) \in \Delta_T$ and $ 1 \leq i \leq [p]\,.$ 
\end{definition}

We are now ready to define the notion of a geometric rough path.
\begin{definition}
A geometric $p$-rough path is a $p$-rough path that can be expressed as a limit of $1$-rough paths in the $p$-variation metric. 
\end{definition}
The space of geometric $p$-rough paths in $V$ is denoted by $G \Omega_p(V)\,.$

In order to state our second main result, we need to introduce the notion of $Lip (\gamma) $ function (that we define more generally in order to follow the exposure in \cite{lyons2007differential}).
\begin{definition}
Let $V$ and $W$ be two Banach spaces. Let $k \geq 0$ be an integer. Let $\gamma \in (k, k+1]$ be a real number. Let $F$ be a closed subset of $V$. Let $f : F \to W$ be a function. For each integer $j = 1,\ldots, k$ let $f^j: F \to \mathbf{L}(V^{\otimes j}, W)$ be a function which takes its values in the space of $j$-linear mappings from $V$ to $W.$ The collection ($f=f^0, f^1, \ldots, f^k$) is an element of $Lip(\gamma, F)$ if the following condition holds.

There exists a constant $M$ such that, for each $j=0, \ldots, k, $
\begin{equation*}
\sup_{x\in F}|f^j(x)| \leq M
\end{equation*}
and there exists a function $R_j : V \times V \to \mathbf{L}(V^{\otimes j}, W) $ such that, for each $x, y \in F$ and each $v \in V^{\otimes j}, $ we have 
\begin{equation*}
f^j(y)(v)=\sum\limits_{l=0}^{k-j}\frac{1}{l!}f^{j+l}(x)(v\otimes(y-x)^{\otimes l})+R_j(x,y)(v)\,,
\end{equation*}
and
\begin{equation*}
|R_j(x,y)| \leq M |x-y|^{\gamma-j}\,.
\end{equation*}
The smallest $M$ for which the inequalities hold for all $j$ is called the $Lip (\gamma, F)$-norm of $f$\,.
\end{definition}

Following \cite{lyons2007differential}, when $V$ is finite dimensional, we obtain that there exist for all closed $F$ a continuous extension operator $Lip(\gamma, F) \to Lip (\gamma, V)$. Thus, in this manner we obtain $Lip(\gamma, V)=Lip (\gamma)$, i.e. bounded continuous functions on $V$ which are $k$-times continuously differentiable with bounded derivatives on $V$ and whose $k$-th differential is H\"older continuous with parameter $\gamma-k$.

\subsection{ Universal Limit Theorem and the backward Loewner differential equation }

In the next sections, we work with the backward Loewner differential equation

\begin{equation}\label{6}
\partial_{t}h(t,z)=\frac{-2}{h(t,z)-\sqrt{\kappa}B_{t}}\,, \hspace{10mm} h(0,z)=z, z \in \mathbb{H},
\end{equation}
where $0 \neq \kappa \in \mathbb{R}_+$ and $B_t$ a standard one-dimensional Brownian motion.
By performing the identification $Z_t=h_t(z)-\sqrt{\kappa}B_t$, we obtain the following dynamics in $\mathbb{H}$ that we consider throughout this section
 $$dZ_t=\frac{-2}{Z_t}dt-\sqrt{\kappa}dB_t, \hspace{5mm}Z_0=z_0 \in \mathbb H.$$

We consider the backward Loewner differential equation started from $Z_0 \in \mathbb{H}$ with $|Z_0|=\delta$, for any $\delta>0$. Furthermore, one can write the backward Loewner differential equation as $dZ_t=V(Z_t)dX_t,$
with $V(Z)=(V_1(Z), V_2(Z)),$ where $V_1(Z)=\frac{-2}{Z}\frac{d}{dz}$ and $V_2(Z)=\sqrt{\kappa}\frac{d}{dx}$ are the two vector fields of the equation. Moreover, the equation is driven by the two-dimensional path $X_t=(t, B_t)$.

\begin{remark}[Geometric Rough Path lift of $X_t=(t, B_t)$]
When discussing continuity properties of the solution to the backward Loewner differential equation with respect to the parameter $\kappa$, we need that the pair $X_t=(t, B_t)$ to be a geometric rough path. Thus, we need to consider a different lift from the It\^o one. Since $t$ is of bounded variation, the pair $X_t$ is a Young pairing. Since $B_t$ is one-dimensional Brownian motion, then there is  a canonical lift to a geometric rough path (for higher dimensions, it is shown  in \cite{sipilainen1993pathwise} that the Stratonovich lift of the Brownian motion is a geometric rough path). We use this lift to see the pair $X_t=(t, B_t)$ as a geometric $p$-rough path for $p>2$. For further details, see Section $9.4$ in \cite{friz2010multidimensional}.
\end{remark}
We remark also the following.
\begin{remark}\label{Ito=stratono}
In the case of the backward Loewner differential equation driven by $\sqrt{\kappa}B_t$ the It\^o lift or the Stratonovich lift of the the iterated integrals produce the same solution. Indeed, when considering the It\^o-Stratonovich correction for a time-homogeneous diffusion $$dZ_t=\mu(Z_t)dt+\sigma(Z_t)dB_t,$$ we have
$$\int_0^T\sigma(Z_t)\circ dB_t=\frac{1}{2}\int_{0}^T\frac{d \sigma(Z_t)}{dx}\sigma(Z_t)dt+\int_0^T\sigma(Z_t)dB_t.$$
In our case, since $dZ_t=\frac{-2}{Z_t}dt-\sqrt{\kappa}dB_t$, we have that $\sigma(Z_t)$ is a constant. Thus, when studying this equation we obtain $$\int_0^T\sigma(Z_t)\circ dB_t=\int_0^T\sigma(Z_t)dB_t.$$\\
\end{remark}
We further define the notion of solution to a Rough Differential Equation. 
\begin{definition}\label{defsolRDETerry}
Let $f : W \to L(V, W)$ be a $Lip(\gamma-1)$ function and let us consider $$X \in G\Omega_p(V)\hspace{2mm} \text{and} \hspace{2mm} \zeta \in W.$$
Set $f_{\zeta}(\cdot)=f(\cdot+ \zeta)$. Define $h: V\oplus W \to End(V \oplus W)$ via
$$\begin{bmatrix}
Id_V & 0\\
f_{\zeta}(y) & 0\\
\end{bmatrix}.$$
We call $Z \in G \Omega_p(V \oplus W)$ a solution to the differential equation $dY_t=f(Y_t)dX_t$, $Y_0=\zeta>0$ if the following conditions hold
\begin{itemize}
\item $Z=\int h(Z)dZ$,
\item $\Pi_V(Z)=X$, where $\Pi_V(\cdot)$ is the projection map to the first component.
\end{itemize}
\end{definition}
The main result that we use in our proof is the following theorem.
\begin{theorem}[Universal Limit Theorem, Theorem $5.3$ in \cite{lyons2007differential}]\label{UniversalLimit}
Let $p \geq 1$ and let $\gamma > p$ be real numbers. Let $f :W \to L(V, W)$ be a $Lip(\gamma)$ function. For all $X \in G\Omega_p(V)$ and all $\zeta \in W$, the equation 
$$dY_t=f(Y_t)dX_t, \hspace{2mm} Y_0=\zeta$$
admits a unique solution $Z=(X,Y) \in G\Omega_p(V \oplus W)$ in the sense of the Definition  \ref{defsolRDETerry}. The solution depends continuously on $X$ and $\zeta$ and the mapping $$I_f: G\Omega_p(V) \times W \to G\Omega_P(W)$$ which sends $(X, \zeta)$ to $Y$ is the unique extension of the It\^o map which is continuous in the $p$-variation topology. 
\end{theorem}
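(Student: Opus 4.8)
The plan is to prove Theorem~\ref{UniversalLimit} by the classical rough-path fixed-point argument (following the approach of \cite{lyons1998differential,lyons2007differential}); I will indicate the skeleton rather than reproduce all of Lyons' estimates. The first ingredient I would establish is a quantitative rough-integration estimate: if $\theta$ is a $Lip(\gamma-1)$ one-form on a Banach space and $Z\in G\Omega_p$, then the rough integral $\int\theta(Z)\,dZ$ is a well-defined geometric $p$-rough path, and there is a control $\omega$ dominating $Z$ such that the integral is dominated by $C\bigl(\|\theta\|_{Lip(\gamma-1)}\bigr)\,\omega$, with $C$ a universal function of $p$, $\gamma$ and $\|\theta\|_{Lip(\gamma-1)}$. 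This is proved level by level: the first $[p]$ levels come from the definition of the integral, while the higher levels are controlled by the neoclassical inequality, the hypothesis $\gamma>p$ being exactly what makes the relevant Taylor remainders summable. Since $f$ is $Lip(\gamma)$, the $End(V\oplus W)$-valued one-form $h$ of Definition~\ref{defsolRDETerry} is $Lip(\gamma-1)$, so $Z\mapsto\int h(Z)\,dZ$ is a well-defined self-map of $\{\,Z\in G\Omega_p(V\oplus W):\Pi_V(Z)=X\,\}$, and a solution is precisely a fixed point of this map.

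Next I would set up local existence and uniqueness by contraction. Fix a control $\omega$ dominating $X$. On a short interval $[s,t]$, work in the complete metric space of geometric $p$-rough paths over $V\oplus W$ with first component $X|_{[s,t]}$, a prescribed value at $s$, and dominated by a fixed multiple of $\omega$, equipped with the $d_p$-metric of the excerpt. One shows that $\mathcal M:Z\mapsto\int h(Z)\,dZ$ maps this ball into itself and is a contraction with constant at most $C\bigl(\|f\|_{Lip(\gamma)}\bigr)\,\omega(s,t)^{\eta}$ for some $\eta=\eta(p,\gamma)>0$; choosing $t$ so that $\omega(s,t)$ lies below a threshold depending only on $\|f\|_{Lip(\gamma)}$ makes $\mathcal M$ a strict contraction, and the Banach fixed-point theorem yields a unique local solution $Z$. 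Because the threshold is uniform in $s$ and $\omega$ is superadditive, one partitions $[0,T]$ into finitely many such intervals, solves on each, and concatenates the pieces via Chen's identity; local uniqueness together with uniqueness of the concatenation gives a unique global solution in the sense of Definition~\ref{defsolRDETerry}.

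Finally I would prove continuity and the characterization as the unique extension of the It\^o map. Running the same machinery for two inputs $(X,\zeta)$ and $(\tilde X,\tilde\zeta)$ that share a common control (in the sense of Definition~\ref{pvarconv}) produces, on each interval of the partition and hence globally, a stability estimate $d_p(Z,\tilde Z)\le C\bigl(d_p(X,\tilde X)+|\zeta-\tilde\zeta|\bigr)$ with $C$ depending only on $\|f\|_{Lip(\gamma)}$ and the common control; this is precisely the continuity of $I_f$ in the $p$-variation topology. When $X$ has bounded variation the rough integral reduces to the Riemann--Stieltjes integral, so the rough solution agrees with the classical ODE solution; since bounded-variation (indeed smooth) rough paths are dense in $G\Omega_p(V)$ and $I_f$ is continuous, $I_f$ is the unique continuous extension of the classical It\^o map, as claimed.

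The step I expect to be the main obstacle is the first one together with the contraction bound: obtaining the level-dependent analytic estimate for $\int\theta(Z)\,dZ$ with the correct dependence on $\|\theta\|_{Lip(\gamma-1)}$, and then verifying that the resulting fixed-point map is a genuine contraction on a short interval. This is where the delicate combinatorics of rough path theory all enter --- summability of the Taylor remainders when $\gamma>p$, control of the levels above $[p]$, and the choice of a workable complete metric space of rough paths sharing a common control. Granting those, the globalization by patching and the density argument for the extension statement are routine.
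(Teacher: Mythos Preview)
The paper does not prove this theorem at all: it is quoted verbatim as Theorem~5.3 of \cite{lyons2007differential} and used as a black box in the proof of Theorem~\ref{continuityflow}. There is therefore no ``paper's own proof'' to compare against. Your sketch is a faithful outline of the standard argument in \cite{lyons1998differential,lyons2007differential} --- the rough-integration estimate for $Lip(\gamma-1)$ one-forms, the short-time contraction on a ball of rough paths sharing the prescribed $V$-component, globalisation by patching via Chen's identity, and the density-of-smooth-paths argument for the extension statement --- and your identification of the level-dependent integral bound and the contraction constant as the technical heart is accurate. For the purposes of this paper, though, nothing beyond citing the reference is expected.
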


In the remaining part of the paper we prove the following theorem.

\begin{theorem}\label{continuityflow}
For $\delta>0$, the backward Loewner differential equation driven by $\sqrt{\kappa}B_t$ with $\kappa \in \mathbb{R}_+$, $\kappa \neq 0$, started from $z_0$ with $|z_0|=\delta>0$ is a well defined Rough Differential Equation that has a  unique solution.
This unique solution is a.s. continuous with respect to the starting point $z_0 \in \mathbb{H}$ and $\sqrt{\kappa}B_t$ in the $p$-variation topology, for $p \in (2,3]$. 
\end{theorem}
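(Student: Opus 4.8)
The plan is to verify the hypotheses of the Universal Limit Theorem (Theorem \ref{UniversalLimit}) for the backward Loewner equation written in the form $dZ_t=V(Z_t)dX_t$ with $X_t=(t,B_t)$, and then to read off continuity in $\kappa$ as continuity of the It\^o--Lyons map in one of its inputs. The first step is the key structural observation: starting the equation at $z_0$ with $|z_0|=\delta>0$ keeps the solution away from the singularity of the vector field $V_1(Z)=-2/Z$ for a positive (possibly random) time, and more importantly, because the backward flow pushes points \emph{away} from $\R$ (the imaginary part of $Z_t=h_t(z_0)-\sqrt{\kappa}B_t$ is nondecreasing in $t$ for the backward equation up to the Brownian fluctuation of the real part --- here one must be slightly careful, so the honest statement is that $\mathrm{Im}\,h_t(z_0)$ is nondecreasing, hence $\mathrm{Im}\,Z_t=\mathrm{Im}\,h_t(z_0)\geq \delta'>0$ stays bounded below), the trajectory stays in a region $\{|\,\mathrm{Im}\,z\,|\geq \delta'\}$ on which $z\mapsto -2/z$ is smooth with all derivatives bounded. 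Thus $V$ restricted to a neighbourhood of the (a.s.\ compact on $[0,T]$) trajectory is $Lip(\gamma)$ for every $\gamma$; by the extension operator for $Lip(\gamma)$ functions on closed subsets of a finite-dimensional space (recalled at the end of Section 4.1), we may replace $V$ by a globally $Lip(\gamma)$ vector field agreeing with it near the trajectory without changing the solution.

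Second, I would assemble the rough path input. As noted in the two remarks preceding the statement, $X_t=(t,B_t)$ is a Young pairing of a bounded-variation path with one-dimensional Brownian motion, and the Stratonovich lift of $B_t$ is a geometric $p$-rough path for any $p\in(2,3]$ (citing \cite{sipilainen1993pathwise}, \cite{friz2010multidimensional}); hence $\mathbf X\in G\Omega_p(\R^2)$. Remark \ref{Ito=stratono} shows that since the diffusion coefficient $\sigma\equiv-\sqrt{\kappa}$ is constant, the It\^o and Stratonovich solutions coincide, so the rough-path solution $Z\in G\Omega_p(\R^2\oplus\mathbb C)$ provided by Theorem \ref{UniversalLimit} agrees a.s.\ with the classical solution of the SDE, i.e.\ with the genuine backward Loewner flow. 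This gives the first assertion of Theorem \ref{continuityflow}: the equation is a well-posed RDE with a unique solution.

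Third, for the continuity statement, I would encode the $\kappa$-dependence in the driver rather than the vector field: write the equation with driving path $\sqrt{\kappa}\,B_t$ (equivalently, absorb $\sqrt\kappa$ into $X$, so that the driver is $X^{\kappa}_t=(t,\sqrt\kappa B_t)$ and the vector field $V_2=d/dx$ no longer depends on $\kappa$). Then continuity in $\kappa$ is exactly continuity of the map $\kappa\mapsto I_f(\mathbf X^{\kappa},z_0)$, and by the Universal Limit Theorem the It\^o--Lyons map $I_f:G\Omega_p(V)\times W\to G\Omega_p(W)$ is continuous in the $p$-variation topology in both arguments simultaneously. So it suffices to show $\kappa_j\to\kappa$ implies $\mathbf X^{\kappa_j}\to\mathbf X^{\kappa}$ in $G\Omega_p$; at the level of the first tier this is $\sqrt{\kappa_j}B_t\to\sqrt{\kappa}B_t$ uniformly on $[0,T]$ (immediate, a.s.), and at the second tier the iterated integrals scale multiplicatively in the constants $\sqrt{\kappa_j}$ (the Brownian area scales by $\kappa_j$, the cross terms $\int t\,dB$ by $\sqrt{\kappa_j}$), so $d_p(\mathbf X^{\kappa_j},\mathbf X^{\kappa})\to0$ follows from continuity of these finitely many scalings together with a uniform $p$-variation bound (a common control $w$), exactly as in Definition \ref{pvarconv}. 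Continuity in $z_0$ is the other half of the Universal Limit Theorem's conclusion, subject to keeping $|z_0|=\delta$ so that the trajectory stays in the good region.

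The main obstacle is the first step: making rigorous that the backward flow started at distance $\delta$ from the origin stays, almost surely and uniformly on $[0,T]$, inside a fixed region where the vector field is smooth with bounded derivatives, so that the global $Lip(\gamma)$ extension is legitimate and the rough-path solution truly coincides with the Loewner flow. One must control $\inf_{t\in[0,T]}\mathrm{Im}\,h_t(z_0)$ from below --- here the monotonicity $\partial_t\,\mathrm{Im}\,h_t(z)=\frac{2\,\mathrm{Im}\,h_t(z)}{|h_t(z)-\sqrt\kappa B_t|^2}\geq0$ for the backward equation does the job, giving $\mathrm{Im}\,h_t(z_0)\geq\mathrm{Im}\,z_0$ and hence a uniform lower bound on $\mathrm{Im}\,Z_t$ independent of $\kappa$ --- and then invoke the $Lip(\gamma)$ extension theorem on the closed set $\{\mathrm{Im}\,z\geq\mathrm{Im}\,z_0\}$. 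Everything after that is bookkeeping with the Universal Limit Theorem and the elementary scaling of the Brownian rough path.
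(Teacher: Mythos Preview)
Your proposal is correct and follows essentially the same route as the paper: verify the $Lip(\gamma)$ hypothesis on the vector fields by using the monotonicity $\partial_t\,\mathrm{Im}\,Z_t\geq 0$ of the backward flow to keep $|Z_t|$ bounded away from zero, apply the Universal Limit Theorem for existence and uniqueness, then move $\sqrt\kappa$ into the driver and invoke the convergence $(t,\sqrt{\kappa_j}B_t)\to(t,\sqrt\kappa B_t)$ in $p$-variation (the paper isolates this as Lemma~\ref{lemmacontinuitykappa}) together with the continuity of the It\^o--Lyons map. Your explicit use of the $Lip(\gamma)$ extension operator and your remark on the scaling of the second-level iterated integrals are slightly more detailed than the paper's treatment, but the argument is the same.
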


\subsection{Proof of Theorem \ref{continuityflow}}

We first prove the following lemma.

\begin{lemma}\label{lemmacontinuitykappa}
 For  $\kappa > 0$, let us consider $\kappa_n \to \kappa$, as $n \to \infty$. Then, $$(t,\sqrt{\kappa_n}B_t) \to (t,\sqrt{\kappa}B_t)$$ in the $p$-variation topology as $\kappa_n \to \kappa,$ for $p \in (2,3]$. 
\end{lemma}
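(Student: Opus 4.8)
The plan is to exhibit an explicit common control and an explicit vanishing sequence $a(n)$ that verify Definition \ref{pvarconv} directly, exploiting the fact that the two components of $X^{(n)}_t=(t,\sqrt{\kappa_n}B_t)$ behave very differently. Fix $p\in(2,3]$, so $[p]=2$ and we must control the first and second levels. For the first level, $X^{(n),1}_{s,t}=(t-s,\sqrt{\kappa_n}B_{s,t})$, and since $\sqrt{\kappa_n}\to\sqrt{\kappa}$, the difference at level one is $(0,(\sqrt{\kappa_n}-\sqrt{\kappa})B_{s,t})$, whose norm is bounded by $|\sqrt{\kappa_n}-\sqrt{\kappa}|\,|B_{s,t}|$. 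Using a (random, a.s.\ finite) modulus of continuity for $B$ — e.g.\ $|B_{s,t}|\le C(\omega)|t-s|^{1/3}$ on $[0,T]$, which certainly holds since Brownian paths are a.s.\ $1/3$-H\"older — one gets $\|X^{(n),1}_{s,t}-X^{(1)}_{s,t}\|\le |\sqrt{\kappa_n}-\sqrt{\kappa}|\,C(\omega)\,w(s,t)^{1/p}$ with the choice $w(s,t)=c_\omega|t-s|$ for a suitable a.s.\ finite $c_\omega$ (absorbing $C(\omega)^p$ and enough room for both components; note $(t-s)=w(s,t)^{1/p}\cdot w(s,t)^{1-1/p}$ is also controlled by this $w$ on a bounded interval). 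So level one contributes $a(n)\asymp|\sqrt{\kappa_n}-\sqrt{\kappa}|\to 0$.

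For the second level we use the structure of the Stratonovich/Young lift of $(t,B_t)$. Writing the four components of $X^{(n),2}_{s,t}$ as the iterated integrals of the pairs of coordinates: the $(1,1)$-entry is $\tfrac12(t-s)^2$ (no $\kappa$-dependence, difference zero); the $(1,2)$- and $(2,1)$-entries are $\sqrt{\kappa_n}\int_s^t B_{s,r}\,dr$ and $\sqrt{\kappa_n}\int_s^t r\,dB_{s,r}$ type terms, linear in $\sqrt{\kappa_n}$, so their difference from the $\kappa$-version is again a factor $|\sqrt{\kappa_n}-\sqrt{\kappa}|$ times a quantity bounded by $C(\omega)w(s,t)^{2/p}$; and the $(2,2)$-entry is $\kappa_n\cdot\tfrac12 B_{s,t}^2$ (the Stratonovich area of a one-dimensional path is just $\tfrac12(\text{increment})^2$), whose difference from the $\kappa$-version is $|\kappa_n-\kappa|\cdot\tfrac12 B_{s,t}^2\le |\kappa_n-\kappa|C(\omega)w(s,t)^{2/p}$. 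Collecting, $\|X^{(n),2}_{s,t}-X^{(2)}_{s,t}\|\le (|\sqrt{\kappa_n}-\sqrt{\kappa}|+|\kappa_n-\kappa|)C(\omega)w(s,t)^{2/p}$. Setting $a(n):=C(\omega)\bigl(|\sqrt{\kappa_n}-\sqrt{\kappa}|+|\kappa_n-\kappa|\bigr)$, which tends to $0$, and checking that $w$ as chosen is indeed a genuine control (continuous, superadditive, vanishing on the diagonal — immediate for $w(s,t)=c_\omega(t-s)$) that dominates all the iterated integrals of both $X^{(n)}$ and $X$ simultaneously (here one needs $c_\omega$ large enough, using the same H\"older bound on $B$, the boundedness of $[0,T]$, and $\sup_n\kappa_n<\infty$), completes the verification of Definition \ref{pvarconv}.

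The main obstacle is not any single estimate — each is elementary — but rather the bookkeeping needed to produce \emph{one} control $w$ valid for the whole sequence and the limit at once, with constants that are a.s.\ finite and uniform in $n$. Concretely one must: (i) fix a single a.s.\ event on which $B$ has a deterministic $1/3$-H\"older (or log-modulus) bound on $[0,T]$; (ii) use $\sup_n\kappa_n<\infty$ (true since $\kappa_n\to\kappa$) to make the dominating constant uniform in $n$; and (iii) be slightly careful that the $t$-component, being smoother than $w^{1/p}$ would suggest, still fits under $w$ on the bounded interval $[0,T]$ — this is why one takes $w$ linear in $t-s$ rather than trying to match the exact regularity. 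Once these are in place the three conditions of Definition \ref{pvarconv} hold with the stated $a(n)$, proving convergence in the $p$-variation topology. I would remark that, by Remark \ref{Ito=stratono}, since the diffusion coefficient here is constant the Stratonovich area coincides with the It\^o one, so there is no ambiguity in which lift the statement refers to.
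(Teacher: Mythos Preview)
Your proposal is correct and follows essentially the same strategy as the paper: verify Definition~\ref{pvarconv} directly by exhibiting a linear control $w(s,t)=c(t-s)$ and a vanishing sequence $a(n)$ proportional to $|\sqrt{\kappa_n}-\sqrt{\kappa}|$ (the paper takes $a(n)=1-\sqrt{\kappa_n}/\sqrt{\kappa}$ after assuming without loss of generality that the sequence is monotone). Your treatment is in fact more careful than the paper's --- you make the random H\"older constant explicit and you spell out the four entries of the second level, whereas the paper simply asserts that the higher-level checks ``can be directly checked'' with the same control.
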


\begin{proof}[Proof of Lemma \ref{lemmacontinuitykappa}]
Let is consider  the sequence $\sqrt{\kappa_n} \to \sqrt{\kappa}$ as $n \to \infty$, for $\kappa >0$. Since in the first component there are no changes, we focus directly on the second component of the paths $(t, \sqrt{\kappa_n}B_t)$ and $(t, \sqrt{\kappa}B_t)$. Without loss of generality we can choose an increasing sequence. Using the control $\omega(s,t)=\sqrt{\kappa}(t-s)$ and the sequence $a^{\kappa}_n=\frac{\sqrt{\kappa}-\sqrt{\kappa_n}}{\sqrt{\kappa}}$ in the Definition \ref{pvarconv} we obtain the convergence of in the $p$-variation topology for  $p \in (2,3]$ of the paths $(t, \sqrt{\kappa}_nB_t)$ and $(t, \sqrt{\kappa}B_t)$. Indeed,  we have that  $\omega(s,t)=\sqrt{\kappa}(t-s)$  is  a control for both $\sqrt{\kappa}B_{s,t}$ as well as $ \sqrt{\kappa_n}B_{s,t}$ and $|\sqrt{\kappa}B_{s,t} -\sqrt{\kappa_n}B_{s,t}|\leq |\sqrt{\kappa}B_{s,t}||1-\frac{\sqrt{\kappa_n}}{\sqrt{\kappa}}|.$

Thus, for any $0 \neq \kappa \in \mathbb{R}_+$ we have $\lim_{n \to \infty}a^{\kappa}_n \to 0$. Then, the bound in the Definition \ref{pvarconv}, holds for all pairs $s, t \in \Delta_T$. It can be directly checked that the same convergence result holds with the choice $w(s, t):=\sqrt{\kappa}(t-s)$ for higher levels $1< i \leq [p]$, and we obtain the desired result. 
\end{proof}

We are now ready to prove the second result of this paper.

\begin{proof}[Proof of Theorem \ref{continuityflow}]
We consider the geometric $p$-rough path for lift for $X_t=(t, B_t)$.
In order to prove that the backward Loewner differential equation started from $z_0 \in \mathbb{H}$ with $|Z_0|=\delta>0$ is a Rough Differential Equation with a unique solution, we show that the vector fields $V_1(Z)$ and $V_2(Z)$ are indeed $Lip(\gamma)$ vector fields for $\gamma>2$. The problematic vector field is $V_1(z)$ since the second one is clearly $Lip (\gamma)$ for $\gamma \geq 4$.

In order to show that indeed the first vector field is $Lip(\gamma)$ for $\gamma>3$ we use that
$$\frac{d}{dz}\frac{1}{z}=\frac{-1}{z^2},$$ $$\frac{d}{dz}\frac{-1}{z^2}=\frac{2}{z^3},$$ $$\frac{d}{dz}\frac{2}{z^3}=\frac{-6}{z^4}$$ 
and $$\frac{d}{dz}\frac{-6}{z^4}=\frac{24}{z^5}.$$ 
Thus, the $Lip(4)$ norm of the vector field $1/z$ is bounded for $|z|>\delta>0$. 
Note that, in general, $\frac{d^n}{dz}\frac{1}{z}=\frac{c(n)}{z^{n+1}}$, where the function $c(n)=(-1)^nn!$. Thus, the $Lip(\gamma)$ norm of the vector fields is bounded for any finite $\gamma$. In our analysis, we restrict to $p \in (2,3]$ and thus checking $\gamma >3$ is enough. 

In addition, we observe that for the imaginary part of the backward Loewner differential equation $Y_t$, we have
$$dY_t=\frac{2Y_t}{X_t^2+Y_t^2}dt\,.$$
 In particular, $Y_t\geq Y_0$, for all $t \geq 0$. Thus, we have that $Y_t>0$ for all $t \in [0, T]$. 
Since the imaginary part $Y_t$ increases the vector field $V_1(Z)$ remains bounded for all times $t \in [0,T].$ Using the Stratonovich lift of the pair $(t, B_t)$ and the bounds on $Lip(3)$ norms of the vector fields, we obtain that indeed the backward Loewner differential equation started from $z_0$ with $|z_0|=\delta>0$ is a Rough Differential equation. In particular, since the vector fields of the backward Loewner differential equation are $Lip(\gamma)$ for $\gamma \geq 3$ we obtain applying Theorem \ref{UniversalLimit} that the solution of the backward Loewner differential equation driven by $\sqrt{\kappa}B_t$, started from $z_0 \in \mathbb{H}$ with $|z_0|=\delta>0$, exists and is unique.

Let us consider the paths $X^{\kappa_n}_t=(t, \sqrt{\kappa_n}B_t)$  and $X^{\kappa}_t=(t, \sqrt{\kappa}B_t)$. Using Lemma \ref{lemmacontinuitykappa}, for $\kappa_n \to \kappa$, as $n \to \infty$, we obtain that $X^{\kappa_n}_t \to X^{\kappa}_t$ in the $p$-variation topology, for $p \in (2,3].$ 
Next, by applying Theorem \ref{UniversalLimit} we obtain that the solution of the backward Loewner differential equation started from $\delta>0$, for any $\delta>0$, is continuous in both staring point and $X_t=(t, \sqrt{\kappa}B_t)$ in the $p$-variation topology, for $p \in (2,3]$, and the conclusion follows.  
\end{proof}

\bibliographystyle{plain}
\bibliography{literature.bib}


\end{document}